\let\cal=\mathcal
\newtheorem{theorem}{Theorem}[section]
\newtheorem{problem}[theorem]{Problem}
\newtheorem{lemma}[theorem]{Lemma}
\newtheorem{fact}[theorem]{Fact}
\newcommand{\qed}{\hspace*{\fill} \rule{7pt}{7pt}}
\begin{document}

\title{{\bf Stabilities for non-uniform $t$-intersecting families}\thanks{This paper was firstly announced on January 15, 2024, and it was later published on Electron. J. Combin. 31 (4) (2024), \#P4.3. 
This is the final version; see \url{https://doi.org/10.37236/12706}.  E-mail addresses: ytli0921@hnu.edu.cn (Y. Li), 
wu@hunnu.edu.cn (B. Wu, corresponding author)}}

\author{
Yongtao Li$^{\dag}$,  
Biao Wu$^{\ddag}$\\[2ex]
 {\small $^{\dag}$HNP-LAMA, School of Mathematics and Statistics, Central South University} \\
{\small Changsha, Hunan, 410083, P.R. China } \\
{\small $^{\ddag}$CHP-LCOCS, School of Mathematics and Statistics, Hunan Normal University} \\
{\small Changsha, Hunan, 410081, P.R. China }}

\maketitle

\vspace{-0.7cm}
\begin{abstract}
The study of intersection problems on families of sets 
is one of the most important topics in extremal combinatorics. 
As is well-known, extremal problems involving certain intersection constraints are  equivalent to those with  certain union constraints  
by taking complement of sets. 
A family of sets is called $s$-union if
the union of any two sets in this family  has size at most $s$.
 Katona [Acta Math. Hungar. 15 (1964)] 
provided the maximum size of an $s$-union family of sets of $[n]$, 
and he also determined 
 the extremal families up to isomorphism.
Recently, Frankl [J. Combin. Theory Ser. B 122 (2017) 869--876] sharpened this result by
establishing the maximum size of an $s$-union family that is
not a subfamily of the so-called Katona family. 
In this paper, we determine the maximum size of
an $s$-union family that is neither contained in 
the Katona family nor in the Frankl family.
Moreover, we characterize all extremal
families achieving the upper bounds.
\end{abstract}

{{\bf Key words.}  
Katona's theorem;
$t$-intersecting;
Cross-intersecting. }

{{\bf AMS subject classification.}  05C65, 05D50.}

\section{Introduction}

Let $[n]=\{1,2,\ldots ,n\}$.
The power set $2^{[n]}$ consists of the $2^n$ subsets of $[n]$.
For every subset $F\subseteq [n]$,
we denote by $F^c$ the complement set of $F$ in $[n]$.
We write ${[n] \choose k}$ for the collection
of all $k$-element subsets of $[n]$,
and ${[n] \choose \le k}$
for the collections of all subsets of $[n]$
with size at most $k$.
Let $\mathcal{F}$ be a family of subsets of $[n]$.
We say that $\mathcal{F}$ is $k$-uniform if
all sets of $\mathcal{F}$ have size $k$.
If $\mathcal{F}$ is non-uniform, we usually denote
 $\mathcal{F}_i= \{F\in \mathcal{F} : |F|=i\}= \mathcal{F} \cap
{[n] \choose i}$. 
Given a set $G\subseteq [n]$ and a permutation
$\sigma \in S_n $, we denote $\sigma (G)=\{\sigma (g): g\in G\}$. 
We say that $ \mathcal{G}$ and
$ \mathcal{H}$ are isomorphic if
there is a permutation
$\sigma \in S_n $ such that
$\{\sigma (G): G\in \mathcal{G}\} =\mathcal{H}$. 
For isomorphic families  $\mathcal{G}$ and $\mathcal{H}$,  we denote $\mathcal{G}=\mathcal{H}$  whenever there are no confusions. 
For two families $\mathcal{G}$ and $ \mathcal{H}$,
we say that $\mathcal{G}$ is a subfamily of
$\mathcal{H}$, denoted by $\mathcal{G} \subseteq \mathcal{H} $,
 if there is a permutation 
$\sigma \in S_n$ such that
$\sigma (G)\in \mathcal{H}$ for every $G \in \mathcal{G}$.

\subsection{Uniform intersecting families}

A family $\mathcal{F}$  of sets is called
{\it $t$-intersecting} if $|A\cap B|\ge t$ for all $A,B\in \mathcal{F}$.
For $t=1$, we just say `intersecting' instead of `$1$-intersecting'.
A {\it full star} is  a family that consists of all the $k$-subsets of $[n]$
that contains a fixed element. 
The celebrated Erd\H{o}s--Ko--Rado theorem \cite{EKR1961} states that
when $n\ge 2k$, a full star is the unique intersecting $k$-uniform family 
attaining the maximum size.

\begin{theorem}[Erd\H{o}s--Ko--Rado \cite{EKR1961}] \label{thm-EKR}
Let $n\ge 2k$ and $\mathcal{F} \subseteq {[n] \choose k}$ be an 
intersecting family.
Then
\begin{equation*}
  |\mathcal{F}| \le {n-1 \choose k-1}.
  \end{equation*}
 When $n>2k$,
  the equality holds if and only if
  $\mathcal{F}=\{F\in \tbinom{[n]}{k}: i\in F\}$
  for some $i\in [n]$.
\end{theorem}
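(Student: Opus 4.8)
\noindent\emph{Proof proposal.}
My plan is to establish the inequality by Katona's cyclic permutation method and then to treat the equality case by examining when that argument is tight. Fix a cyclic order on $[n]$, that is, arrange $1,\dots,n$ around a circle, and call a $k$-subset an \emph{arc} of that cyclic order if its elements occupy $k$ consecutive positions on the circle. The core of the method is a purely local fact: \emph{for any fixed cyclic order, a pairwise-intersecting family of arcs has at most $k$ members whenever $n\ge 2k$.} Granting this, I would double count incidences between the $(n-1)!$ cyclic orders of $[n]$ and the members of $\mathcal{F}$. On one side, each cyclic order carries at most $k$ members of $\mathcal{F}$ as arcs; on the other side, a fixed $k$-set is an arc in exactly $k!\,(n-k)!$ of the cyclic orders (order the block of $k$ elements internally in $k!$ ways, then arrange the block together with the remaining $n-k$ elements cyclically in $(n-k)!$ ways). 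Comparing the two counts yields $|\mathcal{F}|\cdot k!\,(n-k)!\le k\cdot (n-1)!$, which simplifies to $|\mathcal{F}|\le\binom{n-1}{k-1}$.

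To prove the local fact, I would assume the family of arcs $\mathcal{A}$ is nonempty and, after rotating, take the arc $A$ occupying positions $1,\dots,k$ to lie in $\mathcal{A}$. Any arc other than $A$ that meets $A$ must either start at some position $1+j$ or end at some position $j$ with $j\in\{1,\dots,k-1\}$; denote these arcs $A_j^{+}$ and $A_j^{-}$ respectively. The point is that, for each such $j$, the arcs $A_j^{+}$ and $A_j^{-}$ together cover $2k$ consecutive positions, so they are disjoint because $n\ge 2k$, and hence at most one of them can belong to the intersecting family $\mathcal{A}$. As the $2(k-1)$ arcs $A_j^{\pm}$ are pairwise distinct and account for all arcs meeting $A$ other than $A$ itself, this gives $|\mathcal{A}|\le 1+(k-1)=k$.

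For the characterisation of equality when $n>2k$, I would trace back through both inequalities of the double count. Equality forces every cyclic order to carry exactly $k$ members of $\mathcal{F}$ as arcs; recording such $k$ arcs by their starting positions gives $k$ elements of $\mathbb{Z}/n$ that are pairwise within cyclic distance $k-1$, and a short argument (here is where $n>2k$ enters) shows these must be $k$ consecutive positions, so the $k$ arcs form a ``fan'' whose members share exactly one common element. Thus on every cyclic order there is an element contained in all of the $\mathcal{F}$-arcs of that order. It remains to promote this to a single element of $[n]$ lying in every member of $\mathcal{F}$; once that is done, $|\mathcal{F}|=\binom{n-1}{k-1}$ matches the size of the full star through that element and forces $\mathcal{F}$ to be exactly that star. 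I expect this last propagation step to be the main obstacle, and I would carry it out either by carefully comparing overlapping cyclic orders or, more robustly, via the shifting (compression) operators, which preserve both the intersecting property and the cardinality and reduce the problem to analysing a left-compressed extremal family from which a full star can be reconstructed. I would flag this uniqueness argument as the portion demanding the most care.
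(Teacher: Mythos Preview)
The paper does not supply its own proof of this theorem; it is quoted as a classical background result with a citation to Erd\H{o}s--Ko--Rado and is never argued in the text. So there is no ``paper proof'' to compare your proposal against.

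That said, your approach via Katona's cyclic permutation method is a standard and correct route to the inequality $|\mathcal{F}|\le\binom{n-1}{k-1}$: the local arc lemma and the double count are both right as stated. For the uniqueness when $n>2k$, your outline is on the right track but, as you yourself flag, the ``propagation'' from a per-cycle common element to a global common element is where the work lies. The cleanest way to finish within the cycle method is to fix two sets $F,F'\in\mathcal{F}$ and choose a cyclic order in which both appear as arcs; since the $k$ arcs of $\mathcal{F}$ in that order form a fan through a single vertex, $F$ and $F'$ share that vertex, and varying over pairs forces a global common element. Your alternative via shifting also works but is a separate machine; if you go that route you should note that shifting can change the family, so one must argue that a left-shifted extremal family is a star and then pull the conclusion back, which requires an extra step. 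Either completion is routine, but you should commit to one and write it out rather than leaving it as a choice.
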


The Erd\H{o}s--Ko--Rado theorem
is widely regarded as a cornerstone of extremal combinatorics
and has various generalizations and applications. 
For related  problems, we recommend \cite{FT2018} for the interested readers.  
Erd\H{o}s, Ko and Rado \cite{EKR1961} also proved that
there exists an integer $n_0(k,t)$ such that if $n\ge n_0(k,t)$ and
$\mathcal{F}\subseteq {[n] \choose k}$
is $t$-intersecting, then $|\mathcal{F}|\le {n-t \choose k-t}$.
The smallest possible such $n_0(k,t)$
is $(t+1)(k-t+1)$. This was proved by Frankl \cite{Fra1976} for $t\ge 15$,
and then completely solved by Wilson \cite{Wil1984} for all $t$. 

\begin{theorem} [Exact Erd\H{o}s--Ko--Rado Theorem \cite{Fra1976,Wil1984}] \label{thm21}
Let $k> t\ge 1$ be integers and let 
$\mathcal{F} \subseteq
{[n] \choose k} $
be a $t$-intersecting family.
If ${n\ge (t +1)(k- t +1)}$,
then
\[  |\mathcal{F}|\le {n-t\choose k-t}. \]
The equality holds if and only if
$\mathcal{F}$ is isomorphic to 
$\bigl\{F\in \tbinom{[n]}{k} : [t] \subseteq F
\bigr\}$ or 
$\{F \in {[n] \choose k}: |F\cap [t+2]|\ge t+1\}$. 
For the case $n> (t+1)(k-t+1)$, the former family is the unique extremal family.  
\end{theorem}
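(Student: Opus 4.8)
The plan is to reduce to left-compressed families, induct on $k-t$, and isolate at the heart of the argument a cross-$t$-intersecting estimate; the threshold $(t+1)(k-t+1)$ and the second extremal family $\{F:|F\cap[t+2]|\ge t+1\}$ will surface exactly inside that estimate, which I expect to be the hardest part.

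First I would replace $\mathcal F$ by a left-compressed family. Each $(i,j)$-shift $S_{ij}$ preserves the uniformity $k$, the cardinality, and --- by a routine check --- the $t$-intersecting property, and iterating shifts terminates. Since both candidate extremal families are themselves compressed, it suffices to prove the inequality and to classify the extremal compressed families; the general uniqueness statement is then recovered by the standard bookkeeping that a shift applied to an extremal family can be undone.

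With $\mathcal F$ compressed, I would induct on $k-t$. The base case $k=t+1$ is transparent: two $(t+1)$-sets meeting in at least $t$ points have union of size at most $t+2$, so a $\Delta$-system analysis shows $\mathcal F$ is either a $t$-star or a subfamily of $\{F:|F\cap[t+2]|\ge t+1\}$, and comparing $|\{F:[t]\subseteq F\}|=n-t$ with $|\{F\in{[n]\choose t+1}:F\subseteq[t+2]\}|=t+2$ pins down the threshold $n=2t+2=(t+1)(k-t+1)$. For the inductive step, fix $1\in[t]$ and split $\mathcal F=\mathcal F_1\cup\mathcal F_{\bar 1}$ according to whether $1\in F$; put $\mathcal A=\{F\setminus\{1\}:F\in\mathcal F_1\}\subseteq{[2,n]\choose k-1}$ and $\mathcal B=\mathcal F_{\bar 1}\subseteq{[2,n]\choose k}$, so that $|\mathcal F|=|\mathcal A|+|\mathcal B|$. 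One reads off four properties: $\mathcal A$ is $(t-1)$-intersecting, $\mathcal B$ is $t$-intersecting, the pair $(\mathcal A,\mathcal B)$ is cross-$t$-intersecting (if $1\in F$ and $1\notin G$ then $|(F\setminus\{1\})\cap G|=|F\cap G|\ge t$), and --- since $\mathcal F$ is invariant under $S_{1j}$ --- every $(k-1)$-subset of every member of $\mathcal B$ lies in $\mathcal A$. When $t=1$ the family $\mathcal A$ carries no intersection constraint and one invokes Theorem~\ref{thm-EKR} directly; when $t\ge2$ the inductive hypothesis in uniformity $k-1$ gives $|\mathcal A|\le{(n-1)-(t-1)\choose(k-1)-(t-1)}={n-t\choose k-t}$, using that $n\ge(t+1)(k-t+1)$ forces $n-1\ge t(k-t+1)$.

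The crux --- and the step I expect to be the genuine obstacle --- is a cross-intersecting lemma: if $\mathcal A\subseteq{[m]\choose k-1}$ and $\mathcal B\subseteq{[m]\choose k}$ satisfy the four properties above with $m=n-1\ge(t+1)(k-t+1)-1$, then $|\mathcal A|+|\mathcal B|\le{n-t\choose k-t}$, with equality forcing either $\mathcal B=\emptyset$ and $\mathcal A$ a $(t-1)$-star, or --- only at $m=(t+1)(k-t+1)-1$ --- the Frankl-type configuration in which $\mathcal B$ is the star of a fixed $(t+1)$-set and $\mathcal A$ the matching family. The mechanism is that a nonempty $\mathcal B$ is always overpaid: fixing $B\in\mathcal B$, cross-$t$-intersection confines $\mathcal A$ to $\{A:|A\cap B|\ge t\}$, while the shadow property forces ${B\choose k-1}\subseteq\mathcal A$; quantifying this trade-off over all of $\mathcal B$ --- by a weight/kernel estimate, or by a secondary induction on cross-$t$-intersecting pairs with a shadow constraint --- is precisely where the bound on $n$ is used sharply. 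Reassembling $\mathcal A$ with $\mathcal B$ then yields the bound for $\mathcal F$, and the two equality cases recombine into $\{F:[t]\subseteq F\}$ (the unique extremal family for $n>(t+1)(k-t+1)$) and $\{F:|F\cap[t+2]|\ge t+1\}$ (appearing only at the threshold). Finally, I would note that the inequality alone, without the uniqueness, is obtainable more cheaply by Wilson's algebraic method --- bounding $|\mathcal F|$ via the eigenvalues of the relevant inclusion matrix of the Johnson scheme --- after which the extremal families still must be identified by a stability argument of the kind above.
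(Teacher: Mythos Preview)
The paper does not prove Theorem~\ref{thm21}; it is stated as a known result and cited to Frankl~\cite{Fra1976} and Wilson~\cite{Wil1984}, so there is no ``paper's own proof'' to compare against. Your sketch --- shifting to a left-compressed family, inducting on $k-t$, splitting on the element $1$, and reducing to a cross-$t$-intersecting estimate with a shadow constraint --- is broadly in the spirit of Frankl's original combinatorial argument, and your closing remark about Wilson's algebraic route via the Johnson scheme is also accurate as an alternative for the inequality.

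That said, one point in your outline is not quite right and would need repair in an actual proof. You claim that after left-compression the extremal families can be recovered in general because ``a shift applied to an extremal family can be undone'' by standard bookkeeping. This is false in general: shifting is not injective on families, and a non-extremal family can shift into an extremal one only if cardinalities match, but more importantly an extremal family that is \emph{not} isomorphic to a compressed extremal family could in principle shift to one. The correct way to handle uniqueness is either to track equality cases through every shift (showing that if $S_{ij}(\mathcal F)$ is extremal of the claimed form then so is $\mathcal F$, which requires a separate case analysis), or to argue uniqueness directly without shifting. Your base case $k=t+1$ is fine, but the phrase ``$\Delta$-system analysis'' is vague; the actual argument is simply that any two members share a common $t$-set or all lie in a fixed $(t+2)$-set.
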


Stability results for uniform $t$-intersecting families can be found in 
\cite{Fra1978,Fra1976,AK1996}. 
Moreover, the study of stabilities for $r$-wise $t$-intersecting families 
has risen in popularity in the past few years; see \cite{OV2021,BL2022,CLW2021,CLLW2022}.   
Note that the Erd\H{o}s--Ko--Rado theorem bounds the maximum size of $k$-uniform 
$t$-intersecting families.  In 1964, Katona \cite{Kat1964}  studied 
the problem for {\it non-uniform} $t$-intersecting families.

\begin{theorem}[Katona \cite{Kat1964}] 
\label{thm-kat-left-shift}
Let $n\ge t\ge 2$ be integers  and  let 
$\mathcal{F} \subseteq 2^{[n]}$ be a $t$-intersecting family. \\ 
{\rm (1)} If $n+t=2a$ for an integer $a\ge 1$, then 
\[ |\mathcal{F}| \le \sum_{k\ge a} {n \choose k}. \]
The equality holds if and only if 
$\mathcal{F}=\{F\subseteq [n]: |F|\ge a\}$. \\
{\rm (2)} If $n+t=2a+1$ for an integer $a\ge 1$, then 
\[ |\mathcal{F}| \le  {n-1 \choose a} + 
\sum_{k\ge a+1} {n \choose k}. \]
The equality holds if and only if 
$\mathcal{F}=\{F\subseteq [n]: |F| \ge a+1\} 
\cup {[n-1] \choose a}$. 
\end{theorem}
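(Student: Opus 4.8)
The plan is to normalise $\mathcal{F}$ by two standard operations and then prove the bound by induction on $n$, the heart of the matter being a cross-intersecting inequality for a pair of up-sets; throughout write $\mathrm{Kat}(N,t)$ for the claimed bound on $2^{[N]}$, so that the assertion reads $|\mathcal{F}|\le\mathrm{Kat}(n,t)$. (By complementation the whole problem is equivalent to the ``$s$-union'' form of the abstract with $s=n-t$, but I will argue on the intersecting side.) First I would replace $\mathcal{F}$ by its up-closure $\mathcal{F}^{\uparrow}=\{G\subseteq[n]:F\subseteq G\ \text{for some}\ F\in\mathcal{F}\}$: a superset of a $t$-intersecting pair is again $t$-intersecting, so $\mathcal{F}^{\uparrow}$ is $t$-intersecting and $|\mathcal{F}^{\uparrow}|\ge|\mathcal{F}|$; since both families claimed extremal are up-sets, it suffices to prove the inequality for up-sets and to identify the extremal up-sets (if a non-up-set $\mathcal{F}$ had maximum size, then $|\mathcal{F}^{\uparrow}|=|\mathcal{F}|$ would force $\mathcal{F}=\mathcal{F}^{\uparrow}$). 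Then I would apply all the $(i,j)$-compressions $S_{ij}$ with $i<j$; these preserve $t$-intersection, the up-set property and the cardinality, and both extremal families are already compressed, so I may assume $\mathcal{F}$ is a compressed up-set (the compression step is needed only to make the equality analysis clean).

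Next I would induct on $n$, splitting on the coordinate $n$. With $\mathcal{F}_0=\{F\in\mathcal{F}:n\notin F\}$ and $\mathcal{F}_1=\{F\setminus\{n\}:n\in F\in\mathcal{F}\}$, both inside $2^{[n-1]}$, one has $|\mathcal{F}|=|\mathcal{F}_0|+|\mathcal{F}_1|$, and one checks that $\mathcal{F}_0$ is $t$-intersecting, $\mathcal{F}_1$ is $(t-1)$-intersecting, both are up-sets, $\mathcal{F}_0\subseteq\mathcal{F}_1$ (because $\mathcal{F}$ is an up-set), the pair $(\mathcal{F}_0,\mathcal{F}_1)$ is cross-$t$-intersecting (each $A\in\mathcal{F}_0$ and each $B\cup\{n\}$ with $B\in\mathcal{F}_1$ lies in $\mathcal{F}$), and compressedness forces the extra relation $\{B\cup\{i\}:B\in\mathcal{F}_1,\ i\in[n-1]\setminus B\}\subseteq\mathcal{F}_0$. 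The theorem then reduces to a statement about configurations $(\mathcal{A},\mathcal{B})$ of exactly this shape in $2^{[m]}$ — up-sets, with $\mathcal{A}$ being $t$-intersecting, $\mathcal{B}$ being $(t-1)$-intersecting, $\mathcal{A}\subseteq\mathcal{B}$, cross-$t$-intersecting, plus the relation inherited from compression — asserting $|\mathcal{A}|+|\mathcal{B}|\le\mathrm{Kat}(m+1,t)$, which I would prove by a joint induction on $m$ and $t$ (with $t$ allowed to drop to $1$, where only the inequality is needed); applying it on $[n-1]$ to $(\mathcal{F}_0,\mathcal{F}_1)$ then gives $|\mathcal{F}|\le\mathrm{Kat}(n,t)$. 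In the inductive step one splits $\mathcal{A}$ and $\mathcal{B}$ on the last coordinate; the four resulting families pair up into two configurations of the same shape, at parameters $t$ and $t-1$, so the hypothesis applies to each, and combined with the compression relations (which tighten the estimate on the ``small'' pieces) this reduces everything to a Pascal-rule identity that branches on the parity of $m+t$; the target value $\mathrm{Kat}(m+1,t)$ is forced, since splitting the conjectured extremal family of $2^{[m+1]}$ realises it. Equality is traced back through the induction and through the two normalisations, yielding the stated uniqueness.

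The main obstacle is getting that intermediate statement at precisely the right strength, since the coupling between $\mathcal{A}$ and $\mathcal{B}$ must be used in full. Bounding $|\mathcal{F}_0|$ and $|\mathcal{F}_1|$ separately by Katona's theorem on $[n-1]$ overshoots the target by ${n-2\choose a-1}$ (writing $n+t=2a$), bounding $|\mathcal{F}_0|+|\mathcal{F}_1|$ by $2|\mathcal{F}_1|$ overshoots by ${n-1\choose a-1}$, and even plain cross-$t$-intersection does not close the induction — the piece-bounds still sum to strictly more than $\mathrm{Kat}(m+1,t)$. What rescues it must be the finer structure of a compressed up-set — the containment $\mathcal{F}_0\subseteq\mathcal{F}_1$ and the relation $\{B\cup\{i\}:B\in\mathcal{F}_1,\ i\notin B\}\subseteq\mathcal{F}_0$ above, or their consequences — used in exactly the combination that makes the telescoping exact; finding that combination, and then carrying the even/odd bookkeeping with its two corresponding extremal shapes through every stage, is where essentially all the work lies. (Alternatively the theorem follows from the Ahlswede--Khachatrian complete intersection theorem by passing to the product measure $\mu_{1/2}$, under which the optimal Frankl family is precisely $\{F:|F|\ge a\}$ in the even case and $\{F:|F|\ge a+1\}\cup{[n-1]\choose a}$ in the odd one; but that route is anachronistic relative to Katona's 1964 argument.)
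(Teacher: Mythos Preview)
The paper does not prove this theorem; it is quoted from Katona and then recast in the equivalent $s$-union language as Theorem~\ref{thm1.1}. The route the paper relies on (and which is Katona's original one) is the level-by-level argument encoded in Lemma~\ref{lem-Kat-ineq}: passing by complementation to an $s$-union family $\mathcal{G}=\mathcal{F}^c$ with $s=n-t$, one bounds $|\mathcal{G}_i|+|\mathcal{G}_{s+1-i}|\le\binom{n}{i}$ for each $0\le i\le s/2$ via Katona's intersecting shadow theorem, and summing these layer inequalities gives the bound directly, with the equality case read off layer by layer. There is no induction on $n$ and no cross-intersecting lemma; the shadow inequality does all the work in a single stroke.

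Your approach --- induction on $n$ after normalising to a compressed up-set and splitting on the last coordinate --- is genuinely different, but as written it is a plan rather than a proof. You correctly observe that bounding $|\mathcal{F}_0|$ and $|\mathcal{F}_1|$ separately overshoots by $\binom{n-2}{a-1}$, and you propose to close the gap with a joint bound on cross-$t$-intersecting up-set pairs satisfying the extra compression relation $\{B\cup\{i\}:B\in\mathcal{F}_1,\ i\notin B\}\subseteq\mathcal{F}_0$. But you neither state that intermediate lemma at the sharp strength required nor verify that the four pieces obtained by splitting $(\mathcal{A},\mathcal{B})$ on the last coordinate really reassemble into two configurations of the \emph{same} type (same intersection and containment hypotheses, at parameters $t$ and $t-1$) so that the double induction closes. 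You yourself concede that ``finding that combination\ldots is where essentially all the work lies'' --- and that is exactly the missing idea. By contrast, the shadow-theorem route avoids this difficulty entirely: working level-wise rather than coordinate-wise, the telescoping is automatic and the parity split between $s=2d$ and $s=2d+1$ appears only once, at the very end.
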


For the case $t=1$, it is easy to see that every (non-uniform) intersecting family 
of sets of $[n]$ has size at most $2^{n-1}$, and there are 
many extremal families attaining this bound. 
There are various extension and generalization of Katona's theorem;
see, e.g., \cite{Kle1966,MT1989,FT2013,Fra2017,Fra2017cpc} for more details.

\subsection{Non-uniform families with $s$-union property}

We say that a family $\mathcal{F}$ has the 
{\it $s$-union property}, 
or simply that $\mathcal{F}$ is {\it $s$-union} if
$|F\cup F'|\le s$ holds for every pair $F,F'\in \mathcal{F}$. 
In this paper, we mainly investigate the
 $s$-union families. 
We define $m(n,s)$ as the maximum of $|\mathcal{F}|$
over all $\mathcal{F}\subseteq 2^{[n]}$ having the $s$-union property.
Clearly, we have $m(n,0)=1$, $m(n,1)=2$, $m(n,n)=2^n$ and $m(n,n-1)=2^{n-1}$, 
where the last equality holds by considering the $2^{n-1}$ pairs
$\{F,F^c\}$ for each $F\in 2^{[n]}$ and $\mathcal{F}$ 
 contains at most one set of such pairs.

In what follows, we focus mainly on stabilities for 
{\it non-uniform} $t$-intersecting families. 
In fact, the extremal problems for $t$-intersecting families can be reduced to 
that for $s$-union families. Indeed, 
it is easy to see that $\mathcal{F}$ is $t$-intersecting if and only if the dual family 
$\mathcal{F}^c :=\{[n]\setminus F : F\in \mathcal{F}\}$ 
is $(n-t)$-union. Note that  $\mathcal{F}$ and 
$\mathcal{F}^c$ have the same size.  
For notational convenience, we shall study the extremal problems 
in the language of the $s$-union property, rather than $t$-intersecting property. 
In particular, Katona's result in Theorem \ref{thm-kat-left-shift}
can be equivalently written as the following.  

\begin{theorem}[Katona \cite{Kat1964}] \label{thm1.1}
Let $2\le s \le n-2$ be integers and let $\mathcal{F}\subseteq 2^{[n]}$
be  $s$-union. \\
{\rm (1)} If $s=2d$ for an integer $d\ge 1$, then
\[ |\mathcal{F}| \le \sum_{0 \le i \le d} {n \choose i},\]
 with equality if and only if
$\mathcal{F}= \mathcal{K}(n,2d) := {[n] \choose \le d}$. \\  
{\rm (2)} If $s=2d+1$ for an integer $d\ge 1$, then
\[ |\mathcal{F}| \le \sum_{0\le i\le d} {n \choose i}
+ {n-1 \choose d}, \]
with equality  if and only if
 $\mathcal{F} \!=\! \mathcal{K}(n,2d+1) \!:=\! {[n] \choose \le d} 
\cup \bigl\{F \!\in \! {[n] \choose d+1},y\in F \bigr\} $  
for some $y\in [n]$.
\end{theorem}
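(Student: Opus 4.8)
The plan is to recast this as a shifting/compression argument, following Katona's original approach but phrasing everything in the $s$-union language. First I would recall that the standard reduction works via \emph{down-shifting} (the compression operators $S_{ij}$ that replace $j$ by $i$ in each set when this does not create a collision): these operators preserve the $s$-union property and preserve $|\mathcal F|$, so it suffices to prove the bound for a family that is \emph{compressed} (invariant under every $S_{ij}$, $i<j$). For such a family one shows that membership is monotone in a strong sense, and in particular the family is determined by which ``initial segments'' (in the colex or reverse-lex order on each layer) it contains. The key structural consequence of compression together with the $s$-union property is: if $F,F'\in\mathcal F$ are both of maximal size, then writing $|F|=|F'|=k$ we get $2k-|F\cap F'|=|F\cup F'|\le s$, and compressedness forces the top layer to look like an initial segment, which one pushes down to bound $|\mathcal F_k|$ against $\binom{n}{k}$ or $\binom{n-1}{k}$.

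Concretely I would split into the two parity cases exactly as in the statement. For $s=2d$: suppose some $F\in\mathcal F$ has $|F|\ge d+1$. Using compression we may assume $F=[k]$ with $k\ge d+1$; then for every $F'\in\mathcal F$ the bound $|F\cup F'|\le 2d$ gives $|F'\setminus[k]|\le 2d-k\le d-1$, and running a symmetric argument (or applying compression in the complementary coordinates) shows $\mathcal F$ collapses into a family that is strictly smaller than $\binom{[n]}{\le d}$ unless in fact no set has size exceeding $d$ — hence $\mathcal F\subseteq\binom{[n]}{\le d}$, giving the bound, with equality only for the full $\mathcal K(n,2d)$. The cleanest way to run the ``strictly smaller'' step is a direct double-counting / Kruskal--Katona-type estimate on the top two layers: a compressed $s$-union family with a set of size $\ge d+1$ has top layers whose combined size is provably at most $\sum_{i\le d}\binom{n}{i}-1$. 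For $s=2d+1$: by the same reduction the sets of size $\ge d+2$ can be eliminated, and the sets of size exactly $d+1$ form a compressed $(d+1)$-uniform family in which any two have union of size $\le 2d+1$, i.e.\ intersection $\ge 1$ — an intersecting uniform family — so by (the compressed case of) Erd\H{o}s--Ko--Rado, Theorem~\ref{thm-EKR}, it has size at most $\binom{n-1}{d}$, with equality iff it is a full star; combined with $\mathcal F_{\le d}\subseteq\binom{[n]}{\le d}$ this yields the bound and pins down the extremal family as $\mathcal K(n,2d+1)$.

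I would be careful about two points. First, the equality analysis: after compression one only knows the \emph{compressed} extremal families are the stated $\mathcal K(n,s)$, and one must argue that un-compressing a family that is not already $\mathcal K(n,s)$ strictly decreases either the size or the $s$-union parameter, so that the only (iso-classes of) extremal families are the compressed ones; this is the standard but slightly delicate ``reverse shifting'' step and I would cite the uniqueness in Theorem~\ref{thm-EKR} for the odd case and handle the even case by the observation that $\binom{[n]}{\le d}$ is the unique maximum antichain-free downset of its size. Second, the boundary hypotheses $2\le s\le n-2$ must be used to ensure the compression arguments have room to operate (e.g.\ that a set of size $d+1$ together with its behaviour under $S_{ij}$ genuinely constrains the rest of the family); the degenerate cases $s\in\{0,1,n-1,n\}$ are exactly the ones already handled separately in the text.

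The main obstacle I anticipate is the ``strictly smaller unless $\mathcal F\subseteq\binom{[n]}{\le d}$'' estimate in the even case: one needs a clean quantitative bound showing that the presence of even a single set of size $d+1$ in a compressed $s$-union family costs more in the lower layers than it gains in the top layer. Katona's original proof does this by an elegant symmetrization pairing sets with their complements layer by layer; I would try to reproduce that pairing, as it simultaneously gives the bound and the equality characterization, rather than going through Kruskal--Katona, which would give the bound but make the uniqueness statement more painful to extract.
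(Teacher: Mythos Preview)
The paper does not prove Theorem~\ref{thm1.1} --- it is quoted as Katona's result --- but the tool it isolates for later use, Lemma~\ref{lem-Kat-ineq}, gives the argument in one line: summing $|\mathcal F_i|+|\mathcal F_{s+1-i}|\le\binom{n}{i}$ over $i=0,\dots,\lfloor s/2\rfloor$ bounds all layers except, when $s=2d+1$, the middle layer $\mathcal F_{d+1}$, which is intersecting and is handled by Theorem~\ref{thm-EKR}. The equality clauses of Lemma~\ref{lem-Kat-ineq} (forcing $\mathcal F_{s+1-i}=\emptyset$ and $\mathcal F_i=\binom{[n]}{i}$ whenever $n\ge s+2$) and of Theorem~\ref{thm-EKR} then pin down the extremal families directly, with no shifting and hence no reverse-shifting. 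This is exactly the ``pairing sets with their complements layer by layer'' that you mention only as a fallback in your last paragraph; it should be the main line, not the backup.

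Your primary route --- compress, then in the even case argue that a single set of size $d+1$ already forces $|\mathcal F|<\sum_{i\le d}\binom{n}{i}$ --- has a genuine gap. From $[d+1]\in\mathcal F$ and the $2d$-union property you correctly deduce $|F\setminus[d+1]|\le d-1$ for every $F\in\mathcal F$, but the family of \emph{all} such $F\subseteq[n]$ has size $2^{d+1}\sum_{j=0}^{d-1}\binom{n-d-1}{j}$, and this can exceed the target: for $d=2$, $n=6$ it equals $32$ while $\sum_{i\le 2}\binom{6}{i}=22$. So the constraint coming from one large set is not enough; you need the pairwise $s$-union constraints across \emph{all} layers, and packaging those is precisely what Lemma~\ref{lem-Kat-ineq} (proved via the Intersecting Shadow Theorem applied to $\mathcal F_{s+1-i}^c$) does. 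The same issue bites in your odd case: ``by the same reduction the sets of size $\ge d+2$ can be eliminated'' is not justified --- a compressed $(2d{+}1)$-union family can certainly contain $[d+2]$ --- and what you actually need is to trade $|\mathcal F_{d+2}|,\dots,|\mathcal F_{2d+1}|$ against deficits in $|\mathcal F_d|,\dots,|\mathcal F_1|$, which is again Katona's inequality rather than a compression argument.
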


In 2017, Frankl \cite{Fra2017}  proved a stability result for Katona's theorem; 
that is, he determined the maximum size of an $s$-union family
that is not a subfamily of the Katona family $\mathcal{K}(n,s)$.

\begin{theorem}[Frankl \cite{Fra2017}] \label{thm1.2}
Let $2\le s \le n-2$ be integers and let 
$\mathcal{F}\subseteq 2^{[n]}$
be  $s$-union. \\
{\rm (1)} If $s=2d$ for an integer $d\ge 1 $
and $\mathcal{F}\nsubseteq \mathcal{K}(n,2d)$, then
\[ |\mathcal{F}| \le \sum_{0 \le i \le d} {n \choose i} -
{n-d-1 \choose d} +1.\]
Moreover, the equality holds if and only if
$\mathcal{F}$ is isomorphic to
\[ \begin{matrix}
\mathcal{H}(n,2d) :=
{[n] \choose \le d-1} \cup \{D\} \cup 
 \{H \in {[n] \choose d}: H\cap D\neq \emptyset\} 
 \end{matrix} \]
 for some set $D\in {[n] \choose d+1}$. For the case
 $s=4$, apart from $\mathcal{H}(n,4)$,
 there is one more possibility, namely,
$ \mathcal{H}^*(n,4) := {[n] \choose \le 1} \cup
\{H\in \tbinom{[n]}{2} : H\cap [2] \neq \emptyset \} \cup
\bigl\{ \{1,2,i\} : i\in [3,n]   \bigr\} .$ \\ 
{\rm (2)} If $s=2d+1$ for an integer $d\ge 1$
and $\mathcal{F} \nsubseteq \mathcal{K}(n,2d+1)$,
then
\[ |\mathcal{F}| \le \sum_{0\le i\le d} {n \choose i}
+ {n-1 \choose d} - {n-d-2 \choose d} +1 .\]
Moreover, the equality holds if and only if
 $\mathcal{F}$ is isomorphic to
\[ \begin{matrix}
\mathcal{H}(n,2d+1) := {[n] \choose \le d}
\cup \{D\}
\cup \{H \in {[n] \choose d+1}: y\in H, H\cap D\neq \emptyset\}
\end{matrix} \]
for some fixed element $y\in [n]$ and
 set $D\subseteq [n] \setminus \{y\}$ with $|D|=d+1$.
 For the case $s=5$, there is also one more possibility, namely,
$  \mathcal{T}(n,5) := {[n] \choose \le 2} \cup
\bigl\{F\in \tbinom{[n] }{ 3}:
|F\cap [3]|\ge 2 \bigr\}.$
 \end{theorem}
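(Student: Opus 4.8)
The plan is to reduce to down-closed families and then to bound $\mathcal{F}$ layer by layer, the heart of the argument being a cross-intersecting estimate that controls the top layers. Since appending a proper subset of an existing member preserves the $s$-union property and cannot make $\mathcal{F}$ a subfamily of $\mathcal{K}(n,s)$, the asserted upper bound for $\mathcal{F}$ follows from the same bound for its down-closure; hence we may assume $\mathcal{F}$ is a down-set. Write $\mathcal{F}=\bigcup_{i}\mathcal{F}_i$ and $\mathcal{F}_{\ge j}=\bigcup_{i\ge j}\mathcal{F}_i$. Taking $F'=F$ in the union condition gives $|F|\le s$ for every $F\in\mathcal{F}$, and for two members of common size $k$ the condition reads $|F\cap F'|\ge 2k-s$; in particular, for $s=2d$ the layer $\mathcal{F}_{d+1}$ is a $2$-intersecting $(d+1)$-uniform family, and for $s=2d+1$ it is an intersecting one.

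\emph{The even case $s=2d$.} Here $\mathcal{K}(n,2d)=\binom{[n]}{\le d}$ is permutation-invariant, so $\mathcal{F}\nsubseteq\mathcal{K}(n,2d)$ simply means some member has size $>d$, whence $\mathcal{F}_{d+1}\neq\emptyset$ because $\mathcal{F}$ is down-closed. Since $\mathcal{F}_{\le d-1}$ contributes at most $\sum_{i\le d-1}\binom{n}{i}$, it suffices to prove
\[ \sum_{i\ge d}|\mathcal{F}_i|\;\le\;\binom{n}{d}-\binom{n-d-1}{d}+1 . \]
The crucial point is that if $B\in\mathcal{F}$ has $|B|\ge d+1$ and $H\in\mathcal{F}_d$, then $|H\cap B|\ge |B|-d\ge 1$, so $H$ meets $B$; thus every member of $\mathcal{F}_d$ is a transversal of $\mathcal{B}:=\mathcal{F}_{\ge d+1}$. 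If $\mathcal{B}=\{D\}$ then $|D|=d+1$ (down-closedness), so $|\mathcal{F}_d|\le\binom{n}{d}-\binom{n-d-1}{d}$ and the bound holds, with equality forcing a copy of $\mathcal{H}(n,2d)$.

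\emph{The substantial case $|\mathcal{B}|\ge 2$, and the odd case.} When $|\mathcal{B}|\ge 2$ we must recover $|\mathcal{B}|-1$ from the drop in the transversal bound for $\mathcal{F}_d$. Now $\mathcal{B}$ is itself $2d$-union with all members of size $\ge d+1$, hence small and, on its bottom layer, a $2$-intersecting $(d+1)$-uniform family, while $\mathcal{F}_d$ is cross-intersecting with it; this reduces the estimate to a cross-intersecting / Hilton--Milner-type problem, which one settles by case analysis on $\bigcap_{B\in\mathcal{B}}B$ and $\max_{B\in\mathcal{B}}|B|$. Exactly one configuration with $|\mathcal{B}|\ge 2$ keeps the bound tight, namely (for $d=2$) a star of triples through a fixed pair together with all pairs meeting that pair, which is the sporadic family $\mathcal{H}^{*}(n,4)$; all other configurations lose. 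The odd case $s=2d+1$ is parallel, with $\mathcal{F}_{d+1}$ (now merely intersecting) as the critical layer: $\mathcal{F}\nsubseteq\mathcal{K}(n,2d+1)$ forces either a member of size $\ge d+2$ or that $\mathcal{F}_{d+1}$ lies in no star; in the star-free case with $\mathcal{F}_{\ge d+2}=\emptyset$, the Hilton--Milner theorem gives $|\mathcal{F}_{d+1}|\le\binom{n-1}{d}-\binom{n-d-2}{d}+1$, tight at $\mathcal{H}(n,2d+1)$ and, when $d=2$ (so $\mathcal{F}_{d+1}$ is $3$-uniform), also at the triangle family, giving $\mathcal{T}(n,5)$; the presence of larger members is absorbed by the same cross-intersecting estimate.

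\emph{Equality, and the main obstacle.} For the characterisations, one runs equality back through every inequality: the low layers must be complete (a set of size $<d$, resp.\ $\le d$, is union-compatible with anything of the admissible sizes, so this is consistent), and the top layers must realise one of the listed shapes; since an extremal family must be a down-set---otherwise adjoining a proper subset would exceed the bound---this determines $\mathcal{F}$ up to isomorphism. I expect the main obstacle to be precisely the analysis of the case $|\mathcal{B}|\ge 2$: extracting the cross-intersecting inequality with its exact equality cases, and correctly singling out the two sporadic families at $s=4$ and $s=5$, demands a careful case analysis on the shape of the family of large sets, and---because the heuristic that large sets are negligible is only a quantitative one---the boundary regime where $n$ is close to its minimum $s+2$ has to be checked by hand.
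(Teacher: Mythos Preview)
This theorem is cited from Frankl and not proved in the present paper, so there is no in-paper proof to compare against line by line. However, the paper's proof of its own main result (Theorem~\ref{thmmain}) follows exactly the template it attributes to Frankl, and that template differs from yours in one essential point.

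Your reduction to a down-set and your identification of the middle-layer constraints are correct, and invoking Hilton--Milner for the odd case is the right move. The divergence is in how the layers of size $\ge d+2$ are handled. You bound $\mathcal{F}_{\le d-1}$ trivially and then try to control the non-uniform object $|\mathcal{F}_d|+|\mathcal{F}_{\ge d+1}|$ by an ad hoc cross-intersecting argument with case analysis on $\bigcap \mathcal{B}$ and $\max_{B\in\mathcal{B}}|B|$. The paper instead uses Katona's inequality (Lemma~\ref{lem-Kat-ineq}): $|\mathcal{F}_i|+|\mathcal{F}_{s+1-i}|\le\binom{n}{i}$ for every $i\le s/2$, with equality forcing $\mathcal{F}_{s+1-i}=\emptyset$. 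Summing over $i=0,\dots,d-1$ pairs each high layer $d+2,\dots,2d$ with a low one and leaves only the \emph{uniform} pair $(\mathcal{F}_d,\mathcal{F}_{d+1})$ (even case) or only $\mathcal{F}_{d+1}$ (odd case) to bound. This single device eliminates precisely the ``main obstacle'' you flag.

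Without it, your sketch has a genuine gap: the case $|\mathcal{B}|\ge 2$ is not actually carried out, and the claim that in the odd case ``the presence of larger members is absorbed by the same cross-intersecting estimate'' is not substantiated---in the paper's proof of Theorem~\ref{thmmain}, the subcases $|\mathcal{F}_{d+2}|=1$ and $|\mathcal{F}_{d+2}|\ge 2$ each require their own non-trivial argument beyond Hilton--Milner. Your route is not unworkable, but inserting Katona's inequality collapses the problem to a single uniform cross-intersecting estimate and makes the equality analysis (including the sporadic families at $s=4,5$) immediate from the uniform equality cases.
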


We remark here that
the extremal family $\mathcal{H}^*(n,4)$ is missed in \cite{Fra2017}. 
As mentioned early, 
the analogous stability results 
have been studied 
 for uniform intersecting families; see 
 \cite{BL2022,CLW2021,CLLW2022,
HK2017,KM2017,HP2022,Kup2019,OV2021} for some recent progresses.  
Motivated by Theorems \ref{thm1.1} and \ref{thm1.2}, 
one could ask the following stability problem.

\begin{problem} \label{prob1}
What is the maximum size of an $s$-union family of sets of $[n]$ 
that is neither a
subfamily of the Katona family $\mathcal{K}(n,s)$ nor
of the Frankl family $\mathcal{H}(n,s)$?  
\end{problem}

In this paper, we shall solve Problem \ref{prob1} by 
studying the stabilities 
for {\it non-uniform} families and characterizing all extremal
families achieving the maximum size. 
Our approach adopts some similar ideas from Frankl \cite{Fra2017}, where the Hilton-Milner theorem are used, while in our setting, we need to apply a further stability of Han and Kohayakawa; see Section \ref{sec2}. Moreover, we need to prove a result for cross-intersecting families. Our result refines the previous bound due to Frankl \cite{FT1992,Fra}; see Section \ref{sec3}.  

To begin with, we show the case $s\in \{2,3\}$.

\begin{itemize}

\item 
For  $s=2$, recall that
$\mathcal{K}(n,2)=\{F\subseteq [n] : |F|\le 1\}$
and $\mathcal{H}(n,2)=\bigl\{ \emptyset, \{d_1\}, \{d_2\}$, $\{d_1,d_2\} \bigr\}$
for some fixed set $\{d_1,d_2\}\subseteq [n]$.  
If $\mathcal{F}$ is $2$-union, then $\mathcal{F}$ must be
 a subfamily of $\mathcal{K}(n,2)$ or $\mathcal{H}(n,2)$.

\item 
For $s=3$, we know that
$\mathcal{K}(n,3)=\bigl\{ \emptyset, \{1\}, \ldots ,\{n\}\bigr\} \cup
\{F \in {[n] \choose 2} : y\in F\}$ for some fixed $y\in [n]$,
and $\mathcal{H}(n,3)=\bigl\{ \emptyset, \{1\},  \ldots ,\{n\}\bigr\} \cup
\bigl\{ \{d_1,d_2\} , \{y,d_1\},\{y,d_2\} \bigr\} $ for some
$\{d_1,d_2\} \subseteq [n] \setminus \{y\}$. 
If $\mathcal{F}$ is $3$-union and
$\mathcal{F} $ is neither a subfamily of
$\mathcal{K}(n,3)$ nor of $\mathcal{H}(n,3)$, then
$\mathcal{F}$ contains a $3$-element set, 
whence $|\mathcal{F}|$  is maximized for 
$ \bigl\{  \emptyset, \{a,b,c\},\{a\},\{b\},\{c\} \bigr\}$
for some $\{a,b,c\} \subseteq [n]$.
\end{itemize}

To state our results, 
we define the extremal families formally.  
Let $d\ge 3$ and $n\ge 2d$ be integers, and 
$D_1,D_2 \subseteq [n]$ with
$|D_1|=|D_2|=d+1$ and $|D_1\cap D_2| =d$.  
We define two families as below.
 \[ \begin{matrix}
\mathcal{W}(n,2d) := {[n] \choose \le d-1 } \cup \{D_1,D_2\} \cup 
 \bigl\{ H \in {[n] \choose d}: H\cap D_1
 \neq \emptyset ~\text{and}~ H\cap D_2 \neq \emptyset \bigr\}. 
 \end{matrix} \]
 Moreover, let $\mathcal{J}_2(n,d+1)$  be a family introduced in
Subsection \ref{sec2.1}. We define
\[ \mathcal{W}(n,2d+1) := \tbinom{[n] }{ \le d}\cup \mathcal{J}_2(n,d+1), \]
In addition, for the case $s=6$,
we need to define two exceptional families:  
\[ \mathcal{W}^*(n,6) := \tbinom{[n]}{\le 2} \cup
\{F\in \tbinom{[n]}{ 3} :
F\cap [3] \neq \emptyset \} \cup
\bigl\{ H\in \tbinom{[n]}{4}: [3]\subseteq H\bigr\} \]
and 
\[ 
\mathcal{W}^{**}(n,6) := \tbinom{[n]}{\le 2} \cup
\{F\in \tbinom{[n]}{3} :
F\cap [2] \neq \emptyset \} \cup
\bigl\{ H\in \tbinom{[n]}{4}: [2]\subseteq H  \bigr\}.  
 \]
For the case $s=7$, we also have two exceptional families as below 
\[ \mathcal{W}^*(n,7) := \tbinom{[n]}{ \le 3}
\cup
\{ H \in \tbinom{[n] }{ 4} : \{2,3\} \subseteq H\}
\cup \{H \in \tbinom{[n] }{ 4}: 1\in H, H\cap \{2,3\} \neq \emptyset\}  \] 
and 
\[ 
\mathcal{W}^{**}(n,7):= \tbinom{[n]}{ \le 3} 
\cup
\{ H \in \tbinom{[n] }{ 4} : \{2,3,4\} \subseteq H\}
\cup \{H \in \tbinom{[n] }{ 4}:
1\in H, H\cap \{2,3,4\} \neq \emptyset\} . \] 

In the following, we shall characterize the $s$-union families for
every  $s\in  [4, n-2]$.

\begin{theorem}[Main result] \label{thmmain}
Let $4\le s \le n-2$ be integers and
let $\mathcal{F}\subseteq 2^{[n]}$ be an $s$-union family
such that
 $\mathcal{F}\nsubseteq \mathcal{K}(n,s)$
and $\mathcal{F}\nsubseteq \mathcal{H}(n,s)$.
Then the following statement holds.\\
{\rm (1)} If $s=2d$ for an integer $d\ge 2$ and further $\mathcal{F} \nsubseteq
\mathcal{H}^*(n,4)$ for the case $s=4$, then
\[ |\mathcal{F}|\le \sum_{0\le i \le d}{n\choose i}-{n-d-1 \choose d}- {n-d-2 \choose d-1}+2. \]
For $s=6$,
the equality holds  if and only if
$\mathcal{F}$ is isomorphic to
 $\mathcal{W}(n,6)$ or $\mathcal{W}^*(n,6)$ or $\mathcal{W}^{**}(n,6)$; 
for other $s=2d$,
equality holds  if and only if
$\mathcal{F}$ is isomorphic to $\mathcal{W}(n,2d)$.\\
{\rm (2)} If $s=2d+1$ for an integer $d\ge 2$
and further $\mathcal{F} \nsubseteq
\mathcal{T}(n,5)$ for the case $s=5$, then
\[ |\mathcal{F}|\le \sum_{0\le i \le d}{n\choose i}+{n-1\choose d}-{n-d-2 \choose d}- {n-d-3 \choose d-1}+2.\]
 For  $s=7$, the equality holds  if and only if
$\mathcal{F}$ is isomorphic to
 $\mathcal{W}(n,7)$ or $\mathcal{W}^*(n,7)$ or $\mathcal{W}^{**}(n,7)$;
for  other $s\!=\!2d+1$,
equality holds  if and only if
$\mathcal{F}$ is isomorphic to
 $\mathcal{W}(n,2d+1)$.
\end{theorem}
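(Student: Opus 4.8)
The plan is to handle the two parities of $s$ separately, reducing each to a statement about (cross-)intersecting uniform families to which the results of Sections~\ref{sec2} and~\ref{sec3} apply. Write $s=2d$ or $s=2d+1$. Since $\mathcal{F}\nsubseteq\mathcal{K}(n,s)$, some member of $\mathcal{F}$ has size at least $d+1$; put $\ell=\max_{F\in\mathcal{F}}|F|$. The first step is to rule out $\ell\ge d+2$. Fixing $D\in\mathcal{F}$ with $|D|=\ell$, the $s$-union condition gives $|F\cap D|\ge|F|+\ell-s$ for every $F\in\mathcal{F}$; in particular each layer $\mathcal{F}_j$ with $j\ge d+1$ is forced to be highly intersecting (pairwise intersections at least $2(j-d)$), so $\sum_{j\ge d+1}|\mathcal{F}_j|$ is of order only $n^{d-1}$, while every layer $\mathcal{F}_i$ with $i>s-\ell$ loses at least $\binom{n-\ell}{i}$ sets; balancing the bounded gain from the new top layers against the substantial (order $n^{d-1}$) loss in the layer $\mathcal{F}_{d-1}$ shows $|\mathcal{F}|$ is strictly below the claimed bound, uniformly in $n$. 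Hence we may assume $\ell=d+1$, and since adjoining all sets of size at most $d-1$ preserves both the $s$-union property and the hypotheses $\mathcal{F}\nsubseteq\mathcal{K}(n,s),\mathcal{H}(n,s)$ (and $\mathcal{H}^{*}(n,4),\mathcal{T}(n,5)$), we may assume $\binom{[n]}{\le d-1}\subseteq\mathcal{F}$ when proving the bound; extremality will force this inclusion back.

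For odd $s=2d+1$, any two sets of size at most $d+1$ have union of size at most $2d+1$, so the whole of $\binom{[n]}{\le d}$ may be assumed present and the only restriction the $s$-union property imposes on $\binom{[n]}{\le d}\cup\mathcal{F}_{d+1}$ is that $\mathcal{F}_{d+1}$ be intersecting; thus $|\mathcal{F}|=\sum_{i\le d}\binom{n}{i}+|\mathcal{F}_{d+1}|$. A short verification shows $\mathcal{F}\subseteq\mathcal{K}(n,2d+1)$ iff $\mathcal{F}_{d+1}$ lies in a full star, and $\mathcal{F}\subseteq\mathcal{H}(n,2d+1)$ iff $\mathcal{F}_{d+1}$ lies in a Hilton--Milner family, so we are reduced to bounding the size of an intersecting $(d+1)$-uniform family that is a subfamily of neither. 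This is exactly the Han--Kohayakawa stability recalled in Section~\ref{sec2}, which yields the stated bound, the extremal family $\binom{[n]}{\le d}\cup\mathcal{J}_2(n,d+1)=\mathcal{W}(n,2d+1)$, and---in uniformity $4$, i.e.\ $s=7$---the two additional families $\mathcal{W}^{*}(n,7),\mathcal{W}^{**}(n,7)$; the excluded borderline family $\mathcal{T}(n,5)$ is the analogous exception in uniformity $3$.

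For even $s=2d$, set $\mathcal{A}=\mathcal{F}_{d+1}$ and $\mathcal{B}=\mathcal{F}_{d}$. Here the $s$-union property says exactly that $\mathcal{A}$ is $2$-intersecting ($|A\cap A'|\ge2$ for all $A,A'\in\mathcal{A}$) and that every member of $\mathcal{B}$ meets every member of $\mathcal{A}$, while $\binom{[n]}{\le d-1}\subseteq\mathcal{F}$ is unconstrained. If $|\mathcal{A}|=1$, then $\mathcal{B}$ consists of $d$-sets meeting the unique $(d+1)$-set, whence $\mathcal{F}\subseteq\mathcal{H}(n,2d)$, contrary to hypothesis; so $|\mathcal{A}|\ge2$. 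Thus $|\mathcal{F}|\le\sum_{i\le d-1}\binom{n}{i}+\bigl(|\mathcal{A}|+|\mathcal{B}|\bigr)$, and everything comes down to maximising $|\mathcal{A}|+|\mathcal{B}|$ over $2$-intersecting $\mathcal{A}\subseteq\binom{[n]}{d+1}$ with $|\mathcal{A}|\ge2$ and $d$-uniform $\mathcal{B}$ cross-intersecting with $\mathcal{A}$. This is the refined cross-intersecting estimate of Section~\ref{sec3}: it sharpens Frankl's bound $|\mathcal{A}|+|\mathcal{B}|\le\binom{n}{d}-\binom{n-d-1}{d}+1$ by exploiting both $|\mathcal{A}|\ge2$ and the $2$-intersecting condition, giving $|\mathcal{A}|+|\mathcal{B}|\le\binom{n}{d}-2\binom{n-d-1}{d}+\binom{n-d-2}{d}+2$---which, after a Pascal-identity rewriting, is precisely the bound in the theorem---together with the list of equality cases: generically the two-$(d+1)$-set configuration $\mathcal{W}(n,2d)$, and, when $d=3$, also the ``star at a pair'' and ``star at a triple'' configurations $\mathcal{W}^{**}(n,6),\mathcal{W}^{*}(n,6)$ (for $d=2$ the star-at-a-pair family $\mathcal{H}^{*}(n,4)$ is genuinely larger, which is why it is excluded from the statement). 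Tracing equality back through the reduction forces $\binom{[n]}{\le d-1}\subseteq\mathcal{F}$, forces $\mathcal{B}$ to be the set of all $d$-sets meeting $\mathcal{A}$, and forces $\ell=d+1$, pinning down $\mathcal{F}$ up to isomorphism.

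The technical heart, and the main obstacle, is the refined cross-intersecting estimate of Section~\ref{sec3}. One cannot simply fix $|\mathcal{A}|=2$: a large $2$-intersecting $\mathcal{A}$ (a star at a $2$-set, or for $d=3$ a star at a $3$-set) forces very many $d$-sets to meet all of it, so ruling out or absorbing these configurations, and then establishing the \emph{complete} list of equality cases---which genuinely enlarges when $d$ is small---requires the structure theory of $2$-intersecting families together with a Han--Kohayakawa-type stability. By comparison the reduction to $\ell=d+1$, the translation of ``$\mathcal{F}\nsubseteq\mathcal{K},\mathcal{H}$'' into star/Hilton--Milner conditions, and the invocation of Han--Kohayakawa in the odd case are routine, though the $\ell=d+1$ reduction must be carried out uniformly in $n$, since the exceptional families at $s\in\{4,6,7\}$ show the estimates are tight at the bottom of the range $s\le n-2$.
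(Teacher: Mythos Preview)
Your reduction to the middle layers and the subsequent appeal to Theorem~\ref{cross-intersectingtheo} (even $s$) and Theorem~\ref{thmhk} (odd $s$) matches the paper. Where you diverge is in the treatment of layers above $d+1$, and there the proposal has a real gap.

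You try to eliminate $\ell\ge d+2$ up front by an asymptotic balance: the top layers contribute only ``order $n^{d-1}$'' while the layer $\mathcal{F}_{d-1}$ loses ``order $n^{d-1}$''. This is not a proof for all $n\ge s+2$. The $t$-intersecting bound $|\mathcal{F}_j|\le\binom{n-t}{j-t}$ you implicitly use needs $n\ge(t+1)(j-t+1)$, and for $n$ near $s+2$ those upper layers can be far larger than your heuristic allows; meanwhile the ``losses'' you count are of the \emph{same} order as the gains, so there is no slack to absorb constants. You acknowledge the difficulty (``must be carried out uniformly in $n$'') but do not resolve it.

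The paper never performs this reduction. It takes $\mathcal{F}$ hereditary and uses Katona's inequality (Lemma~\ref{lem-Kat-ineq}), $|\mathcal{F}_i|+|\mathcal{F}_{s+1-i}|\le\binom{n}{i}$, to pair every small layer with its large partner in one line, valid for every $n\ge s+2$ with no asymptotics. Only the central pair $(\mathcal{F}_d,\mathcal{F}_{d+1})$ (even $s$) or the single layer $\mathcal{F}_{d+1}$ (odd $s$) needs the sharp estimate. In the odd case the paper still splits on $|\mathcal{F}_{d+2}|\in\{0,1,\ge2\}$, but only to verify that Theorem~\ref{thmhk} (or, when $|\mathcal{F}_{d+2}|=1$, the Hilton--Milner bound) remains applicable to $\mathcal{F}_{d+1}$, and to observe that Katona's inequality is \emph{strict} once $\mathcal{F}_{d+2}\neq\emptyset$; thus $\ell=d+1$ is recovered \emph{a posteriori} at equality, not imposed in advance.

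A second gap: for $s=4$ you route through Theorem~\ref{cross-intersectingtheo}, but that theorem requires $d\ge3$ (indeed Lemma~\ref{cross-intersectinglemma2} shows the bound is false at $d=2$, which is exactly why $\mathcal{H}^{*}(n,4)$ must be excluded). The paper handles $s=4$ by a direct finite case analysis on $|\mathcal{F}_3|$.
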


\noindent 
{\bf Organization.} 
The paper is organized as follows. In Section \ref{sec2},
we review some basic preliminaries, including the stabilities of intersecting families,  Katona's inequality, and the shifting operation. 
In Section \ref{sec3},
we shall give a sharp upper bound on the maximum
of the sum of  sizes of two cross-intersecting
families in which one of them is $2$-intersecting
(Theorem \ref{cross-intersectingtheo}), which plays a significant role in the proof of our main result.
In Section \ref{sec4}, we shall give the 
proof of Theorem \ref{thmmain}. Some ideas of our proof  are motivated by Frankl's papers  \cite{Fra2017,Fra}.   
In the last section, we conclude with some possible problems.

\section{Preliminaries}
\label{sec2}

\subsection{Stabilities for  intersecting families}
\label{sec2.1}

We say that an intersecting family $\mathcal{F}$ is
{\it trivial} if all its members  share a common element. 
In other words,
an intersecting family is called trivial if it is a subfamily of a full star.
Erd\H{o}s, Ko and Rado \cite{EKR1961} asked for the maximum size of
a nontrivial intersecting family of $k$-element subsets of $[n]$.
In 1967, Hilton and Milner \cite{HM1967} answered this question
by proving the following result. 
We denote by
$\mathcal{EKR}(n,k)$ the family of all
$k$-element subsets of $[n]$
containing a fixed element. 

\begin{theorem}[Hilton--Milner \cite{HM1967}] \label{thmhm}
Let $k\ge 2$ and $n\ge 2k$ be integers and $\mathcal{F}\subseteq {[n] \choose k}$ be
an intersecting  family.
 If  $\mathcal{F}\nsubseteq \mathcal{EKR}(n,k)$,  then
\begin{equation*}
 |\mathcal{F}| \le {n-1 \choose k-1} -{n-k-1 \choose k-1} +1.
 \end{equation*}
 Moreover, for $n>2k$, the equality holds if and only if
$\mathcal{F}$ is isomorphic to
\[ \begin{matrix}
\mathcal{HM}(n,k):=
\Bigl\{G\in {[n] \choose k}:
1\in G, G\cap [2,k+1] \neq \emptyset \Bigr\}
\cup \Bigl\{ [2,k+1]  \Bigr\}.
\end{matrix} \]
 or  in the case of $k=3$,
there is one more possibility, namely
\[ \begin{matrix}
\mathcal{T}(n,3):=\left\{F\in {[n] \choose 3}:
|F\cap [3]|\ge 2 \right\}.
\end{matrix} \]
\end{theorem}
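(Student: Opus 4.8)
\medskip
\noindent\emph{Proof strategy.}
The plan is to run the whole argument through the \emph{covering number} $\tau(\mathcal F)$, the least size of a set meeting every member of $\mathcal F$. A $k$-uniform intersecting family is a subfamily of a full star exactly when all of its members share an element, so the hypothesis $\mathcal F\nsubseteq\mathcal{EKR}(n,k)$ is precisely $\tau(\mathcal F)\ge 2$. I would then split into the case $\tau(\mathcal F)=2$, which is where the extremal families sit, and the case $\tau(\mathcal F)\ge 3$, which should only produce families that are strictly smaller once $n>2k$.

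\medskip
\noindent\emph{The case $\tau(\mathcal F)=2$.}
Fix a transversal $\{x,y\}$ and decompose $\mathcal F=\mathcal C\sqcup\mathcal A\sqcup\mathcal B$, where $\mathcal C$ is the set of members containing both $x$ and $y$, $\mathcal A$ those containing $x$ but not $y$, and $\mathcal B$ those containing $y$ but not $x$; since neither $\{x\}$ nor $\{y\}$ is a transversal, $\mathcal A\neq\emptyset\neq\mathcal B$. Put $\mathcal A^-=\{A\setminus\{x\}:A\in\mathcal A\}$ and $\mathcal B^-=\{B\setminus\{y\}:B\in\mathcal B\}$, both $(k-1)$-uniform on the $(n-2)$-set $[n]\setminus\{x,y\}$. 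Checking all pairs shows that the only restriction the intersecting property of $\mathcal F$ puts on the triple $(\mathcal C,\mathcal A^-,\mathcal B^-)$ is that $\mathcal A^-$ and $\mathcal B^-$ be \emph{cross-intersecting}; $\mathcal C$ may be arbitrary (so $|\mathcal C|\le\binom{n-2}{k-2}$, with equality iff $\mathcal C$ is the full star on $\{x,y\}$), and conversely any such data reassembles into an intersecting family. Hence
\[
|\mathcal F|\ \le\ \binom{n-2}{k-2}+\max\bigl\{\,|\mathcal X|+|\mathcal Y|\,\bigr\},
\]
the maximum being over nonempty cross-intersecting pairs $\mathcal X,\mathcal Y\subseteq\binom{[n-2]}{k-1}$. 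The key input is the sharp Hilton--Milner-type bound for cross-intersecting families: if $\mathcal X,\mathcal Y\subseteq\binom{[m]}{r}$ are nonempty and cross-intersecting with $m\ge 2r$, then $|\mathcal X|+|\mathcal Y|\le\binom{m}{r}-\binom{m-r}{r}+1$, with equality for $m>2r$ only when one of $\mathcal X,\mathcal Y$ is a singleton $\{X_0\}$ and the other is the family of all $r$-sets meeting $X_0$, plus, when $r=2$, the configuration in which $\mathcal X=\mathcal Y$ is a full star. I would prove this by simultaneous shifting of $\mathcal X$ and $\mathcal Y$, or quote it from Hilton--Milner. Taking $m=n-2$ and $r=k-1$ and applying Pascal's identity gives $|\mathcal F|\le\binom{n-1}{k-1}-\binom{n-k-1}{k-1}+1$. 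For $n>2k$, equality forces $\mathcal C$ to be the full star on $\{x,y\}$ and $(\mathcal A^-,\mathcal B^-)$ to be extremal for the cross-intersecting bound; unravelling the ``singleton plus all sets meeting it'' configuration reproduces (up to isomorphism) $\mathcal{HM}(n,k)$, with $y$ the centre and $D=X_0\cup\{x\}$ the exceptional member, while for $k=3$ the ``two equal stars'' configuration reproduces $\mathcal T(n,3)$, with the distinguished triple $\{x,y,z\}$.

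\medskip
\noindent\emph{The case $\tau(\mathcal F)\ge 3$.}
Here the goal is to show $\mathcal F$ is never extremal when $n>2k$. Since no $2$-element set meets all of $\mathcal F$, from a fixed $A\in\mathcal F$ one can locate a bounded ``kernel'' (for example three members with empty common intersection) and then classify the members of $\mathcal F$ by their trace on it, using the intersecting property to discard enough traces to force $|\mathcal F|<\binom{n-1}{k-1}-\binom{n-k-1}{k-1}+1$ for $n>2k$. (For $n=2k$ the right-hand side collapses to $\binom{2k-1}{k-1}$, the maximum size of any intersecting family, so the inequality is automatic and, consistently, no characterisation is claimed.) I expect this $\tau\ge 3$ estimate to be the main obstacle: the naive bound obtained by classifying a member $F$ only by $|F\cap A|$ is already too weak for $k\ge 3$, and one really has to exploit the cross-intersecting restrictions among the members that meet $A$ in a single point -- exactly the delicate step in the original Hilton--Milner argument. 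A smaller but genuine technical point is the bookkeeping in the $\tau=2$ case needed to certify that no extremal family has been overlooked and that the extra configuration appears only for $k=3$, which reduces to the fact that for $r\ge 3$ the ``two equal stars'' total $2\binom{m-1}{r-1}$ is strictly less than $1+\binom{m}{r}-\binom{m-r}{r}$.
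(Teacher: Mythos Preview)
The paper does not actually prove this statement: Theorem~\ref{thmhm} is quoted from \cite{HM1967} as a preliminary tool and is used later only as a black box (in Subcase~2.2 of Section~\ref{sec4}), so there is no in-paper proof to compare your proposal against.

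On its own merits, your $\tau(\mathcal F)=2$ argument is correct and tidy. The cross-intersecting inequality you invoke is precisely Lemma~\ref{lem-FT-cross} of the paper (the Frankl--Tokushige bound), and your reassembly of the two extremal cross-intersecting configurations into $\mathcal{HM}(n,k)$ and, for $k=3$, $\mathcal T(n,3)$ is accurate; in particular both extremal families in the theorem really do have covering number $2$, so nothing is being lost by confining the characterisation to that case. The Pascal-identity arithmetic $\binom{n-2}{k-2}+\binom{n-2}{k-1}=\binom{n-1}{k-1}$ closes the bound exactly as you say.

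The genuine gap is the $\tau(\mathcal F)\ge 3$ case, which you yourself flag: you outline an approach (fix a kernel of three members with empty common intersection and count by traces) but do not execute it, and the ``naive'' trace bound is, as you note, insufficient. A complete proof here needs an explicit estimate---for instance a bound of the shape $|\mathcal F|\le k^2\binom{n-2}{k-2}$ for $\tau\ge 3$, or Frankl's sharper results on intersecting families with prescribed covering number---followed by a verification that this falls strictly below $\binom{n-1}{k-1}-\binom{n-k-1}{k-1}+1$ for every $n>2k$. Until that is written out, the proposal is a correct strategy rather than a proof.
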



In 2017, Han and Kohayakawa \cite{HK2017}
 determined the maximum size
of a non-trivial intersecting uniform family that
is not a subfamily of the Hilton--Milner family.
To proceed, we introduce some notation and define families  $\mathcal{J}_i(n,k)$
and $\mathcal{G}_i(n,k)$.
Let $k\ge 3$, $i\le k-1$ and $n>2k$ be positive integers.
For any $(k-1)$-element set $E\subseteq [n]$,
 any $(i+1)$-element set $J\subseteq [n] \setminus E$, $x_0\in J$ and $J_i=J\setminus \{x_0\}$,  we define
\[  \begin{matrix}
\mathcal{J}_i(n,k) :=
\left\{G \in {[n] \choose k}: x_0\in G,
G\cap (E\cup \{j\}) \neq \emptyset ~\text{for each $j\in J_i$} \right\}
\cup \Bigl\{ E\cup \{j\}:j\in J_i \Bigr\}.
\end{matrix}  \]
We next define the family $\mathcal{G}_i(n,k)$.
Suppose now that $i\in [2,k]$,
 $x_0 \in [n]$ and $E\subseteq [n] \setminus \{x_0\}$ is an
$i$-element set.
We define the
$k$-uniform family $\mathcal{G}_i(n,k)$
as 
\[  \begin{matrix}
\mathcal{G}_i(n,k) :=\left\{G \in {[n] \choose k}: E\subseteq G \right\}
\cup \left\{G \in {[n] \choose k}: x_0 \in G,
G \cap E \neq \emptyset \right\}.
\end{matrix}  \]

The  result of Han and Kohayakawa \cite{HK2017}
can be stated as below.

\begin{theorem}[Han--Kohayakawa \cite{HK2017}] \label{thmhk}
Let $k\ge 3$, $n>2k$ and $\mathcal{H}$ be an intersecting $k$-uniform
family of subsets of $[n]$. If $\mathcal{H} \nsubseteq \mathcal{EKR}(n,k)$ and $\mathcal{H} \nsubseteq \mathcal{HM}(n,k)$,
and if $k=3$, $\mathcal{H} \nsubseteq \mathcal{G}_2(n,3)$, then
\[  |\mathcal{H}|\le {n-1 \choose k-1}
- {n-k-1 \choose k-1}- {n-k-2 \choose k-2} +2. \]
For $k=4$, the equality holds if and only if  $\mathcal{H}=\mathcal{J}_2(n,4)$, $\mathcal{G}_2(n,4)$ or
$\mathcal{G}_3(n,4)$; for every other $k$, equality holds if and only if
$\mathcal{H}=\mathcal{J}_2(n,k)$.
\end{theorem}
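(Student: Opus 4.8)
The plan is to case-analyze according to the covering number $\tau=\tau(\mathcal{H})$, the least size of a set meeting every member of $\mathcal{H}$. Since $\mathcal{H}$ is not contained in a full star, $\tau\ge 2$. First I would dispose of the case $\tau\ge 3$: by Frankl's bounds on intersecting families with covering number at least $3$, such families are far smaller than the claimed bound once $n>2k$ (the estimate is comfortable for large $n$ and reduces to a short explicit computation when $n$ is close to $2k$, using e.g.\ $|\mathcal{H}|\le 10$ when $k=3$); in particular such a family is never an equality case.

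So assume $\tau=2$ and fix a pair $\{a,b\}$ meeting every member of $\mathcal{H}$. Partition $\mathcal{H}=\mathcal{H}_{ab}\sqcup\mathcal{H}_{a}\sqcup\mathcal{H}_{b}$ according to whether a member contains both of $a,b$, only $a$, or only $b$; since neither $a$ nor $b$ alone is a cover, both $\mathcal{H}_{a}$ and $\mathcal{H}_{b}$ are nonempty. The only intersection constraint not automatically satisfied is that each member of $\mathcal{H}_{a}$ meets each member of $\mathcal{H}_{b}$ away from $\{a,b\}$; equivalently the traces $\mathcal{X}=\{A\setminus\{a\}:A\in\mathcal{H}_{a}\}$ and $\mathcal{Y}=\{B\setminus\{b\}:B\in\mathcal{H}_{b}\}$ form a nonempty cross-intersecting pair of $(k-1)$-uniform families on the $(n-2)$-element set $[n]\setminus\{a,b\}$, where $n-2>2(k-1)$. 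As $\mathcal{H}_{ab}$ is otherwise unconstrained, $|\mathcal{H}_{ab}|\le\binom{n-2}{k-2}$, hence
\[ |\mathcal{H}|\ \le\ \binom{n-2}{k-2}+|\mathcal{X}|+|\mathcal{Y}|. \]
Plugging the Hilton--Milner bound for cross-intersecting families in here merely reproves Theorem \ref{thmhm}; the improvement must come from going one level deeper.

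The crux is a second-level Hilton--Milner stability for cross-intersecting pairs, which I would prove by shifting together with a short case analysis (or bootstrap from Theorem \ref{thmhm}): if $(\mathcal{X},\mathcal{Y})$ is not a sub-pair of any Hilton--Milner-extremal cross-configuration --- namely (i) one of the two families equals a singleton $\{Y_{0}\}$ and the other is contained in the family of all $(k-1)$-sets meeting $Y_{0}$, or (ii) both families are contained in a common full star --- then
\[ |\mathcal{X}|+|\mathcal{Y}|\ \le\ \binom{n-2}{k-1}-\binom{n-k-1}{k-1}-\binom{n-k-2}{k-2}+2, \]
which together with $|\mathcal{H}_{ab}|\le\binom{n-2}{k-2}$ gives exactly the claimed bound by Pascal's identity. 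To apply this I must exclude (i) and (ii). If $(\mathcal{X},\mathcal{Y})$ sits inside (i), then $\mathcal{H}\subseteq\mathcal{HM}(n,k)$ with star-element $a$ and special set $Y_{0}\cup\{b\}$, contradicting the hypothesis; if it sits inside (ii) with common full star at $x$, then $\mathcal{H}\subseteq\{G:|G\cap\{a,b,x\}|\ge 2\}=\mathcal{G}_{2}(n,k)$, which for $k=3$ contradicts the hypothesis $\mathcal{H}\nsubseteq\mathcal{G}_{2}(n,3)$ and for $k\ge 4$ is harmless, since $|\mathcal{G}_{2}(n,k)|$ itself does not exceed the claimed bound (with equality only when $k=4$). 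The main obstacle is precisely to establish this cross-intersecting stability cleanly and, crucially, to enumerate its extremal pairs: for $k-1\in\{2,3\}$ there are additional extremal cross-configurations, and these are exactly what produce the exceptional families for small $k$.

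Finally I would read off the equality cases by reversing all the inequalities. Equality forces $\tau=2$, forces $\mathcal{H}_{ab}$ to consist of all $k$-sets containing $\{a,b\}$, and forces $(\mathcal{X},\mathcal{Y})$ to be an extremal pair for the cross-intersecting stability. The generic such pair --- $\mathcal{Y}$ consisting of two $(k-1)$-sets sharing a common $(k-2)$-set and $\mathcal{X}$ being all $(k-1)$-sets meeting both --- reassembles into a copy of $\mathcal{J}_{2}(n,k)$; the two additional extremal cross-pairs available when $k=4$ reassemble into $\mathcal{G}_{2}(n,4)$ and $\mathcal{G}_{3}(n,4)$; and for every other $k$ only $\mathcal{J}_{2}(n,k)$ survives, as asserted.
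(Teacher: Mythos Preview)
The paper does not prove Theorem~\ref{thmhk}; it is quoted from Han and Kohayakawa~\cite{HK2017} and invoked as a black box in the proof of Theorem~\ref{thmmain}, so there is no in-paper argument to compare against. Your outline is essentially the strategy of the original proof in~\cite{HK2017}: split on the covering number $\tau(\mathcal{H})$, and for $\tau=2$ fix a $2$-cover $\{a,b\}$, write $|\mathcal{H}|\le\binom{n-2}{k-2}+|\mathcal{X}|+|\mathcal{Y}|$ for the cross-intersecting traces $\mathcal{X},\mathcal{Y}\subseteq\binom{[n]\setminus\{a,b\}}{k-1}$, and appeal to a second-level stability for nonempty cross-intersecting $(k-1)$-uniform pairs that excludes both the singleton configuration (which would collapse $\mathcal{H}$ into $\mathcal{HM}(n,k)$) and the common-star configuration (which would collapse $\mathcal{H}$ into $\mathcal{G}_2(n,k)$). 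Your reading of how the extremal cross-pairs reassemble into $\mathcal{J}_2(n,k)$ and, for $k=4$, into $\mathcal{G}_2(n,4)$ and $\mathcal{G}_3(n,4)$, matches~\cite{HK2017}. Two points deserve more care than your sketch gives them. First, the $\tau\ge 3$ case is not quite automatic: Frankl's bounds for intersecting families with $\tau\ge 3$ are of order $k^{2}\binom{n-3}{k-3}$, which does not obviously fall below the target for $n$ just above $2k$, and~\cite{HK2017} devotes a separate argument to this range. Second, the cross-intersecting stability you state is exactly the substantive lemma of~\cite{HK2017}; it does not bootstrap trivially from Theorem~\ref{thmhm}, and proving it together with the full list of extremal pairs (especially the extra ones at $k-1=3$ that yield $\mathcal{G}_3(n,4)$) is where the real work lies.
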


For more stability results on uniform intersecting families, 
we refer the interested readers to the recent papers  \cite{KM2017,HP2022,Kup2019}.  
The following lemma was provided Katona \cite{Kat1964}; see \cite{Fra2017} 
for a detailed proof. 

\begin{lemma}[See \cite{Kat1964,Fra2017}] 
\label{lem-Kat-ineq}
If $\mathcal{F}\subseteq 2^{[n]}$ has $s$-union property, then
for every $i\in [0,s/2]$, 
\begin{equation} \label{eqkat}
|\mathcal{F}_i| + |\mathcal{F}_{s+1-i}| \le {n \choose i}.
\end{equation}
Moreover, for $n\ge s+2$, in case of equality,
$\mathcal{F}_i ={[n] \choose i}$ and $\mathcal{F}_{s+1-i}= \emptyset$ holds.
\end{lemma}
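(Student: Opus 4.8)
The plan is to prove Lemma~\ref{lem-Kat-ineq} by exhibiting, for each fixed $i\in[0,s/2]$, an explicit pairing between the $i$-sets in $\mathcal{F}_i$ and the $(s+1-i)$-sets in $\mathcal{F}_{s+1-i}$ so that no $i$-set is used more than once and every $(s+1-i)$-set in $\mathcal{F}_{s+1-i}$ is matched to a distinct $i$-set \emph{that lies inside it}; this immediately gives $|\mathcal{F}_i|+|\mathcal{F}_{s+1-i}|\le\binom{n}{i}$, since the matched $i$-sets together with $\mathcal{F}_i$ form an antichain-free count bounded by the total number of $i$-subsets of $[n]$. The right combinatorial device is a symmetric chain decomposition of $2^{[n]}$ (or, more economically, a normalized-matching / LYM-type argument restricted to the two layers $\binom{[n]}{i}$ and $\binom{[n]}{s+1-i}$): if one could match \emph{all} of $\mathcal{F}_{s+1-i}$ into $\binom{[n]}{i}\setminus\mathcal{F}_i$, we would be done. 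So the heart of the matter is to show such a matching exists whenever $\mathcal{F}$ is $s$-union.

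First I would observe the key structural consequence of the $s$-union property: if $A\in\mathcal{F}_{s+1-i}$ and $B\in\mathcal{F}_i$, then $|A\cup B|\le s$ forces $|A\cap B|\ge |A|+|B|-s = 1$, i.e.\ $A$ and $B$ must intersect. More usefully, I would pass to the \emph{complementary} side: writing $\mathcal{G}=\{F^c:F\in\mathcal{F}_{s+1-i}\}\subseteq\binom{[n]}{n-s-1+i}$, the $s$-union condition between $\mathcal{F}_i$ and $\mathcal{F}_{s+1-i}$ translates into a cross-containment / cross-disjointness statement, and the problem becomes a bipartite matching question in the Boolean lattice between layer $i$ and layer $s+1-i$. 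By the normalized matching property of $2^{[n]}$ (Kruskal--Katona in its shadow/shade form, or a direct Hall's-theorem argument on the bipartite inclusion graph between these two layers), one shows: for any family $\mathcal{A}\subseteq\binom{[n]}{i}$ and $\mathcal{B}\subseteq\binom{[n]}{s+1-i}$ such that every $A\in\mathcal{A}$ and $B\in\mathcal{B}$ have $A\not\subseteq B$ (equivalently $|A\cap B|<i$, which the $s$-union bound does \emph{not} quite give directly)—so instead I would run the argument the other way, matching each $B\in\mathcal{F}_{s+1-i}$ to an $i$-subset of $B$ not already in $\mathcal{F}_i$. Concretely: build a bipartite graph with left vertices $\mathcal{F}_{s+1-i}$ and right vertices $\binom{[n]}{i}\setminus\mathcal{F}_i$, joining $B$ to each of its $i$-subsets that lies on the right; a perfect matching saturating the left side plus $\mathcal{F}_i$ gives the bound. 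To verify Hall's condition for any subfamily $\mathcal{B}'\subseteq\mathcal{F}_{s+1-i}$ one needs $|\partial_i(\mathcal{B}')\setminus\mathcal{F}_i|\ge|\mathcal{B}'|$, where $\partial_i$ is the $i$-shadow; this is where Kruskal--Katona (giving $|\partial_i(\mathcal{B}')|$ large) combined with the fact that $\mathcal{F}_i$ cannot be too large relative to $\mathcal{F}_{s+1-i}$ (again by $s$-union, pairing within the family) closes the loop, with the shifting/compression technique used to reduce to the case where $\mathcal{F}$ is shifted so the shadows are initial segments.

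The equality analysis for $n\ge s+2$ is the second half: if $|\mathcal{F}_i|+|\mathcal{F}_{s+1-i}|=\binom{n}{i}$, then every inequality above is tight. Tightness in Kruskal--Katona forces the relevant family to be (a translate of) an initial segment, and tightness of Hall forces the matched sets to exhaust $\binom{[n]}{i}$; chasing these through, with the $s$-union constraint preventing a set and its "partner'' from coexisting, yields that the only way to achieve equality is $\mathcal{F}_i=\binom{[n]}{i}$ and then $\mathcal{F}_{s+1-i}=\emptyset$ (since any single $(s+1-i)$-set, having an $i$-subset in $\binom{[n]}{i}=\mathcal{F}_i$, violates $s$-union when $n\ge s+2$ guarantees there is room for a genuine conflict). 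The main obstacle I anticipate is getting the matching/Hall step clean: the naive "intersection $\ge 1$'' consequence of $s$-union is weaker than "$i$-subset containment,'' so one must either work on the complementary layers (where $s$-union becomes a cleaner disjointness statement) or invoke a shifting argument to assume $\mathcal{F}$ is compressed, at which point the two layers are initial segments and the matching can be written down explicitly. I would follow Frankl's exposition in \cite{Fra2017} here, since the lemma is attributed jointly to Katona and to that paper for the detailed proof, and the compression-plus-explicit-matching route is the standard one.
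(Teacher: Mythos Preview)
Your plan correctly identifies the decisive move---pass to complements $\mathcal{F}_{s+1-i}^c\subseteq\binom{[n]}{n-s-1+i}$ and observe that $\Delta_i(\mathcal{F}_{s+1-i}^c)\cap\mathcal{F}_i=\emptyset$ (any $i$-set inside some $G\in\mathcal{F}_{s+1-i}^c$ is disjoint from $G^c\in\mathcal{F}_{s+1-i}$, forcing union size $s+1$). But the step you entrust to Kruskal--Katona / LYM / symmetric chain decompositions does not go through. What you need is $|\Delta_i(\mathcal{F}_{s+1-i}^c)|\ge |\mathcal{F}_{s+1-i}^c|$, and plain Kruskal--Katona (or normalized matching) only gives
\[
\frac{|\Delta_i(\mathcal{G})|}{\binom{n}{i}}\ \ge\ \frac{|\mathcal{G}|}{\binom{n}{n-s-1+i}}\ =\ \frac{|\mathcal{G}|}{\binom{n}{s+1-i}}.
\]
Since $i<s+1-i$, whenever $s+1-i\le n/2$ one has $\binom{n}{i}<\binom{n}{s+1-i}$ and the inequality points the wrong way; shifting to initial segments does not repair this (take $\mathcal{G}=\binom{[n]}{n-s-1+i}$ for a blunt counterexample). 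Your Hall verification for the direct matching $B\mapsto$ ``$i$-subset of $B$ outside $\mathcal{F}_i$'' has the same defect, compounded by the fact that $i$-subsets of $B\in\mathcal{F}_{s+1-i}$ \emph{can} lie in $\mathcal{F}_i$ (if $A\subseteq B$ then $|A\cup B|=s+1-i\le s$ for $i\ge1$), so there is no disjointness to exploit on that side.

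The missing ingredient---and what the paper actually uses---is Katona's Intersecting Shadow Theorem, applied to $\mathcal{F}_{s+1-i}^c$, which is $(n-s)$-intersecting because $\mathcal{F}_{s+1-i}$ itself is $s$-union. With $k=n-s-1+i$, $t=n-s$ and $u=i$ one gets
\[
|\Delta_i(\mathcal{F}_{s+1-i}^c)|\ \ge\ \frac{\binom{2k-t}{i}}{\binom{2k-t}{k}}\,|\mathcal{F}_{s+1-i}^c|\ =\ \frac{\binom{n-s-2+2i}{i}}{\binom{n-s-2+2i}{i-1}}\,|\mathcal{F}_{s+1-i}|\ \ge\ |\mathcal{F}_{s+1-i}|,
\]
the last inequality holding for $n\ge s+1$. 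No compression or Hall argument is needed. For the equality analysis with $n\ge s+2$, if $\mathcal{F}_{s+1-i}\neq\emptyset$ then tightness in the shadow theorem forces $\mathcal{F}_{s+1-i}^c=\binom{Y}{n-s-1+i}$ with $|Y|=n-s-2+2i$, and then the displayed ratio is $>1$, contradicting equality; hence $\mathcal{F}_{s+1-i}=\emptyset$ and $\mathcal{F}_i=\binom{[n]}{i}$. Your sketch of the equality case via ``tightness in Kruskal--Katona gives an initial segment'' is thus also off target: it is the intersecting-shadow equality case, not Kruskal--Katona's, that drives the conclusion.
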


\subsection{The shifting operation and the lexicographic order}

The remainder of this section is devoted to a useful operation of families
 used in this article and a lemma related to lexicographic order.
Let us recall the definition of the $(i,j)$-shift $S_{i,j}$.
Given a family $\mathcal{F} \subseteq 2^{[n]}$,
for $1\le i<j\le n$, we define
\[  S_{i,j}(\mathcal{F}) = \{S_{i,j}(F) : F\in \mathcal{F}\}, \]
where
\[  S_{i,j}(F) = \begin{cases}
F':=(F\setminus \{j\}) \cup \{i\}, & \text{if $j\in F,i\notin F$
and $F'\notin \mathcal{F}$}; \\
F, & \text{otherwise}.
\end{cases} \]
This operation was introduced
 by Erd\H{o}s,
Ko and Rado \cite{EKR1961} and is now 
an important technique in extremal set theory;
see the comprehensive book \cite{FT2018}.
From the definition, we know that
$|S_{i,j}({F})| =|{F}|$
and $|S_{i,j}(\mathcal{F})| =|\mathcal{F}|$.
We say that $\mathcal{F}$ is {\it left-shifted} if
$S_{i,j}(\mathcal{F}) =\mathcal{F}$ for all
$1\le i<j \le n$.
The following fact is frequently used  in this paper.

\begin{fact} Let $\mathcal{F}$ be a left-shifted family
and $\{a_1,\ldots ,a_k\}$ and
$ \{b_1,\ldots ,b_k\}$
be two sets such that
$a_1< \cdots <a_k$ and $b_1< \cdots < b_k$.
If $a_i \le b_i$ for every $1\le i\le k$
and $\{b_1,\ldots ,b_k\}\in \mathcal{F}$,
then $\{a_1,\ldots ,a_k\}\in \mathcal{F}$.
\end{fact}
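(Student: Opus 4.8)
The plan is to deform the larger set $B=\{b_1,\ldots,b_k\}$ into the dominated set $A=\{a_1,\ldots,a_k\}$ by a sequence of single unit decrements, each of which keeps us inside $\mathcal{F}$. First I would record the basic consequence of left-shiftedness that drives everything: if $F\in\mathcal{F}$, $i<j$, $j\in F$ and $i\notin F$, then $F':=(F\setminus\{j\})\cup\{i\}\in\mathcal{F}$. This follows immediately from the definition of $S_{i,j}$, since if $F'\notin\mathcal{F}$ then the three defining conditions hold and $S_{i,j}(F)=F'$, so $F'\in S_{i,j}(\mathcal{F})=\mathcal{F}$, a contradiction; hence $F'\in\mathcal{F}$ in all cases.

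Next I would set up an induction on the nonnegative integer potential $\Phi(B):=\sum_{i=1}^{k}(b_i-a_i)$, where at each stage $B=\{b_1<\cdots<b_k\}$ denotes the current member of $\mathcal{F}$, and I maintain the invariant that $B\in\mathcal{F}$ and $b_i\ge a_i$ for every $i$. If $\Phi(B)=0$ then $B=A$ and there is nothing to prove. Otherwise let $p$ be the \emph{smallest} index with $b_p>a_p$; by minimality together with the invariant one has $b_i=a_i$ for all $i<p$.

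The key step, and the only subtle point, is to check that $b_p-1\notin B$, so that there is genuinely room to decrement. If $p=1$ this is immediate, since $b_1-1<b_1=\min B$. If $p\ge 2$, suppose for contradiction that $b_p-1\in B$; as $b_p-1<b_p$ and $b_{p-1}$ is the largest element of $B$ below $b_p$, this forces $b_{p-1}=b_p-1$. But $p-1<p$ gives $b_{p-1}=a_{p-1}$, so $a_{p-1}=b_p-1$, and the strict monotonicity of $(a_i)$ yields $a_p>a_{p-1}=b_p-1$, i.e.\ $a_p\ge b_p$, contradicting $b_p>a_p$. Hence $b_p-1\notin B$.

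Finally I would set $B'=(B\setminus\{b_p\})\cup\{b_p-1\}$ and apply the basic property with $i=b_p-1$ and $j=b_p$ to conclude $B'\in\mathcal{F}$. Since $b_{p-1}<b_p-1<b_{p+1}$, the new value still occupies the $p$-th slot in increasing order, so the sorted coordinate vector of $B'$ equals that of $B$ with its $p$-th entry lowered by one; thus $b_p-1\ge a_p$ preserves the invariant $b_i\ge a_i$ while $\Phi(B')=\Phi(B)-1$. Applying the induction hypothesis to $B'$ then gives $A\in\mathcal{F}$, which completes the proof. The main obstacle is exactly the room-to-shift lemma of the third paragraph; everything else is bookkeeping, and it is the choice of $p$ as the least violating index, used against the strict monotonicity of both sequences, that makes that lemma go through.
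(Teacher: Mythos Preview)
Your proof is correct. The paper does not actually give a proof of this statement; it records it as a standard ``Fact'' about left-shifted families and moves on. Your argument---reducing to the basic observation that left-shiftedness forces $(F\setminus\{j\})\cup\{i\}\in\mathcal{F}$ whenever $i<j$, $j\in F$, $i\notin F$, and then walking $B$ down to $A$ by unit decrements at the least violating coordinate---is the standard way to unpack this fact, and the ``room to shift'' verification in your third paragraph is exactly the point one has to check. There is nothing to compare against in the paper itself.
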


Let $\mathcal{A}$ and $\mathcal{B}$ be two  families
of  subsets of $[n]$.
We say that $\mathcal{A}$ and $\mathcal{B}$
 are {\it cross-intersecting} if
 $A\cap B\neq \emptyset$ for any $A\in \mathcal{A}$ and
 $B\in \mathcal{B}$.
Recently, the research on cross-intersecting families has attracted extensive attention; 
see, e.g., \cite{FW2023,SFQ2022}. 
It is well-known that applying the left-shifting operation $S_{i,j}$
on two cross-intersecting families $\mathcal{A}$ and $\mathcal{B}$,
the resulting two families are still cross-intersecting.

\begin{fact}[See \cite{EKR1961}]\label{factci}
Let $\mathcal{A}$ and $\mathcal{B}$ be  families
of  subsets of $[n]$. Let $\mathcal{A}$ be $t$-intersecting.
If $\mathcal{A}$ and $\mathcal{B}$ are cross-intersecting,
then $S_{i,j}(\mathcal{A})$ and $S_{i,j}(\mathcal{B})$ are also cross-intersecting.
Moreover, $S_{i,j}(\mathcal{A})$ is also $t$-intersecting.
\end{fact}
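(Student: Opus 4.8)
The statement to prove is Fact~\ref{factci}: applying the shift $S_{i,j}$ to cross-intersecting families $\mathcal{A}$ (which is $t$-intersecting) and $\mathcal{B}$ yields families that are still cross-intersecting, and $S_{i,j}(\mathcal{A})$ remains $t$-intersecting. This is a classical verification, so the plan is to argue directly from the definition of $S_{i,j}$ by case analysis on which sets actually get shifted.

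First I would prove the cross-intersecting part. Take arbitrary $A'\in S_{i,j}(\mathcal{A})$ and $B'\in S_{i,j}(\mathcal{B})$; I must show $A'\cap B'\neq\emptyset$. The key observation is that $S_{i,j}$ only ever removes $j$ and inserts $i$, so the only way a disjointness could be newly created is if the shift strips element $j$ from one set while the other retains $j$ but fails to acquire $i$. I would split into cases according to whether $A'$ and $B'$ equal their preimages or were genuinely shifted. If neither set was changed, then $A'=A\in\mathcal{A}$ and $B'=B\in\mathcal{B}$ intersect by hypothesis. The substantive case is when exactly one set, say $A$, is shifted to $A'=(A\setminus\{j\})\cup\{i\}$ while $B'=B$: if $A'\cap B$ is empty, then since $A\cap B\neq\emptyset$ the common element must have been the removed $j$, forcing $j\in B$ and $i\notin B$; I would then show $B$ itself is shiftable, so $B'=(B\setminus\{j\})\cup\{i\}\neq B$, contradicting $B'=B$. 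The remaining case, where both $A$ and $B$ are shifted, is easy because both then contain $i$. This element-chasing is the crux, but it is short.

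For the $t$-intersecting claim, I would argue analogously: for $A_1',A_2'\in S_{i,j}(\mathcal{A})$ I need $|A_1'\cap A_2'|\ge t$. If neither is shifted this is immediate; if both are shifted they both contain $i$ and neither contains $j$, and removing $j$ from sets that already avoid $j$ changes nothing below $i$, so the intersection size is preserved. The only delicate case is when exactly one, say $A_1$, is shifted while $A_2'=A_2$ is not. Here $|A_1\cap A_2|\ge t$; the shift can only decrease the intersection if the lost element $j$ lay in $A_2$ while the gained element $i$ does not. I would show that in this situation $A_2$ must itself have been shiftable (since $j\in A_2$, $i\notin A_2$), so $A_2'\neq A_2$ unless the shifted version already belonged to $\mathcal{A}$—and in that latter subcase one checks the required intersection directly using the partner set in $\mathcal{A}$. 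The main obstacle, such as it is, will be bookkeeping these ``blocked shift'' subcases cleanly, where $S_{i,j}(F)=F$ holds because the would-be image $F'$ already lies in the family rather than because $F$ is not a candidate. Keeping track of that distinction is what makes the argument fully rigorous, and I would state it as an explicit sublemma before the case split to avoid repetition.
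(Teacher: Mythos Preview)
The paper does not actually prove this fact: it is stated with a citation to \cite{EKR1961} and no argument is given, since it is a standard folklore lemma about the shifting operation. Your proposed proof is the classical one and is essentially correct. One remark on your cross-intersecting case analysis: when you reach the situation $j\in B$, $i\notin B$ yet $S_{i,j}(B)=B$, you cannot immediately conclude a contradiction from ``$B$ is shiftable''; rather, this forces $B'':=(B\setminus\{j\})\cup\{i\}\in\mathcal{B}$, and you then obtain the contradiction from $A\cap B''=\emptyset$ (since $A\cap B=\{j\}$ and $i\notin A$). You flag exactly this ``blocked shift'' bookkeeping in your final paragraph, so just make sure it is carried out in the cross-intersecting part as well as the $t$-intersecting part.
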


The following lemma follows from Fact \ref{factci}.

\begin{lemma}[See \cite{EKR1961}] \label{lem32}
Let $\mathcal{A} \subseteq {[n] \choose a}$
and $\mathcal{B} \subseteq {[n] \choose b}$ be
cross-intersecting families and let $\mathcal{A}$ be $t$-intersecting.
Then there exist left-shifted families $\mathcal{A}' \subseteq {[n] \choose a}$
and $\mathcal{B}' \subseteq {[n] \choose b}$ such that all of
 the following hold. 
 \begin{itemize}
 
 \item[{\em (i)}]
  $|\mathcal{A}|=|\mathcal{A}'|$ and $|\mathcal{B}|=|\mathcal{B}'|$;

\item[{\em (ii)}] 
 $\mathcal{A}'$ and $\mathcal{B}'$ are cross-intersecting;

\item[{\em (iii)}] 
 $\mathcal{A}'$ is $t$-intersecting. 
 \end{itemize}
\end{lemma}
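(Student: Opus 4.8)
The plan is to derive the three claimed properties purely from \textbf{Fact \ref{factci}} by iterating the shifting operation. I would argue as follows. Starting from the given pair $(\mathcal{A},\mathcal{B})$, apply all the $(i,j)$-shifts $S_{i,j}$ with $1\le i<j\le n$ in some fixed order, and keep repeating this sweep. Each individual shift $S_{i,j}$ preserves the uniformity of each family (it replaces a set by one of the same size) and preserves cardinalities, since $|S_{i,j}(F)|=|F|$ and $|S_{i,j}(\mathcal{G})|=|\mathcal{G}|$ as noted in the excerpt; this gives (i). By Fact \ref{factci}, if $\mathcal{A}$ is $t$-intersecting and $\mathcal{A},\mathcal{B}$ are cross-intersecting, then after applying $S_{i,j}$ simultaneously to both, the images $S_{i,j}(\mathcal{A})$ and $S_{i,j}(\mathcal{B})$ are again cross-intersecting and $S_{i,j}(\mathcal{A})$ is again $t$-intersecting. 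Hence these two properties (ii) and (iii) are maintained at every step of the iteration.

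The only remaining point is \emph{termination}: I need to know that after finitely many sweeps of shifts the families stabilize, i.e.\ become left-shifted. For this I would use the standard potential-function argument: associate to a family $\mathcal{G}\subseteq\binom{[n]}{k}$ the weight $w(\mathcal{G})=\sum_{G\in\mathcal{G}}\sum_{x\in G}x$, and to the pair the quantity $w(\mathcal{A})+w(\mathcal{B})$. A shift $S_{i,j}$ with $i<j$ never increases any element of any set and strictly decreases $w$ unless it fixes every set of both families. Since $w(\mathcal{A})+w(\mathcal{B})$ is a non-negative integer, after finitely many non-trivial sweeps it cannot decrease further, so we reach a pair $(\mathcal{A}',\mathcal{B}')$ on which every $S_{i,j}$ acts as the identity; by definition this pair is left-shifted. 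Combined with the invariants from the previous paragraph, $(\mathcal{A}',\mathcal{B}')$ satisfies (i), (ii) and (iii).

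One subtlety worth spelling out is the precise meaning of ``simultaneously applying $S_{i,j}$''. The operation $S_{i,j}(F)$ as defined depends on the ambient family $\mathcal{F}$ (through the clause ``$F'\notin\mathcal{F}$''), so to keep cross-intersection one must apply the shift to each family \emph{with respect to itself}, not with respect to the union. Fact \ref{factci} is stated exactly for this interpretation, so I would simply invoke it as a black box; the lemma is a direct iteration of that fact. I expect no real obstacle here: the content is entirely contained in Fact \ref{factci}, and the lemma is essentially a repackaging of it together with the elementary termination argument. The main thing to be careful about in the write-up is to state clearly that both families are shifted in the same coordinate pair at each step and that the process terminates, rather than to belabor any computation.

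\begin{proof}
Repeatedly apply the shifts $S_{i,j}$, $1\le i<j\le n$, simultaneously to both families, each family being shifted with respect to itself. By Fact \ref{factci}, at every stage the two families remain cross-intersecting and the first one remains $t$-intersecting; moreover each shift preserves uniformity and cardinalities. Assigning to a uniform family $\mathcal{G}$ the weight $w(\mathcal{G})=\sum_{G\in\mathcal{G}}\sum_{x\in G}x$, any non-trivial shift strictly decreases $w(\mathcal{A})+w(\mathcal{B})$, a non-negative integer, so after finitely many steps we obtain families $\mathcal{A}'\subseteq\binom{[n]}{a}$ and $\mathcal{B}'\subseteq\binom{[n]}{b}$ on which every $S_{i,j}$ acts trivially; that is, $\mathcal{A}'$ and $\mathcal{B}'$ are left-shifted. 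By construction $|\mathcal{A}'|=|\mathcal{A}|$, $|\mathcal{B}'|=|\mathcal{B}|$, the pair $\mathcal{A}',\mathcal{B}'$ is cross-intersecting, and $\mathcal{A}'$ is $t$-intersecting, which is (i), (ii) and (iii).
\end{proof}
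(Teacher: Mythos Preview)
Your proof is correct and matches the paper's approach: the paper simply states that the lemma ``follows from Fact~\ref{factci}'' without writing out the details, and your argument supplies exactly the standard iteration-plus-termination that is implicit in that remark. There is nothing to add.
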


Finally, let us define the lexicographic on the
$k$-element subsets of $[n]$.
We say that $F$ is smaller than $G$
in the lexicographic order, denoted by
$F\prec G$, if $\min (F\setminus G)< \min (G\setminus F)$ holds.
For example, $\{1,2,3\}\prec \{1,3,4\}$.
Let $k\in [0,n]$ and $m\in [0,{n \choose k}]$ be positive integers.
We  denote by
$\mathcal{L}(n,k,m)$ the family of the smallest $m$ sets
from ${[n] \choose k}$ in the lexicographic order.
Let $a,b,n$ be positive integers with $n>a+b$.
Hilton \cite{Hil1976} observed that
$\mathcal{A} \subseteq {[n] \choose a}$
and $\mathcal{B} \subseteq {[n] \choose b}$ are cross-intersecting
if and only if
$\mathcal{A} \cap \Delta_a (\mathcal{B}^c) = \emptyset$,
where $\mathcal{B}^c=\{[n]\setminus B : B\in \mathcal{B}\}$
denotes the family of complements of sets of $\mathcal{B}$.
This observation together with the
Kruskal--Katona theorem \cite{Kat1966,Kru1963} implies
the following lemma, which
 plays an important role in the treatment of cross-intersecting families;
see  \cite[p.266]{FK2017} for a detailed proof.

\begin{lemma}[See \cite{Hil1976,Kat1966,Kru1963}] \label{lem31}
Let $a,b,n$ be positive integers with $n>a+b$.
If $\mathcal{A} \subseteq {[n] \choose a}$
and $\mathcal{B} \subseteq {[n] \choose b}$ are  cross-intersecting,
then  $\mathcal{L}(n,a,|\mathcal{A}|)$
and $\mathcal{L}(n,b, |\mathcal{B}|)$ are cross-intersecting.
\end{lemma}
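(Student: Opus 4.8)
The plan is to reduce the statement about arbitrary cross-intersecting pairs to the compressed (lexicographic) setting via Hilton's shadow reformulation, and then invoke the Kruskal–Katona theorem. First I would record Hilton's observation precisely: for families $\mathcal{A}\subseteq\binom{[n]}{a}$ and $\mathcal{B}\subseteq\binom{[n]}{b}$, the pair is cross-intersecting if and only if no $A\in\mathcal{A}$ is contained in the complement of any $B\in\mathcal{B}$, i.e. $\mathcal{A}\cap\Delta_a(\mathcal{B}^c)=\emptyset$, where $\mathcal{B}^c=\{[n]\setminus B: B\in\mathcal{B}\}\subseteq\binom{[n]}{n-b}$ and $\Delta_a$ denotes the $a$-shadow. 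The hypothesis $n>a+b$ guarantees $n-b>a$, so the $a$-shadow of an $(n-b)$-set is nonempty and the notation makes sense. Thus the cross-intersecting condition is equivalent to the disjointness $\mathcal{A}\cap\Delta_a(\mathcal{B}^c)=\emptyset$, which in turn forces $|\mathcal{A}|\le\binom{n}{a}-|\Delta_a(\mathcal{B}^c)|$.

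Next I would bring in the Kruskal–Katona theorem in its ``lex is optimal'' form: for any family $\mathcal{C}\subseteq\binom{[n]}{c}$ and any $u\le c$, the lexicographically-first family $\mathcal{L}(n,c,|\mathcal{C}|)$ has shadow no larger than that of $\mathcal{C}$, that is $|\Delta_u(\mathcal{L}(n,c,|\mathcal{C}|))|\le|\Delta_u(\mathcal{C})|$. Applying this with $\mathcal{C}=\mathcal{B}^c$, $c=n-b$, $u=a$ gives $|\Delta_a(\mathcal{L}(n,n-b,|\mathcal{B}|))|\le|\Delta_a(\mathcal{B}^c)|$. Now the key combinatorial fact to check is that taking complements sends the lex-first family among $(n-b)$-sets to the lex-first family among $b$-sets; more precisely, $\mathcal{L}(n,n-b,m)^c$ consists exactly of the $m$ sets in $\binom{[n]}{b}$ that are \emph{last} in lex order, and $\Delta_a$ of this complement is again an initial segment of the lex order on $\binom{[n]}{a}$. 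A cleaner way to phrase the whole argument, which I would adopt, is: $\Delta_a(\mathcal{B}^c)$ is sandwiched so that the complement of the initial segment $\mathcal{A}=\mathcal{L}(n,a,|\mathcal{A}|)$ still avoids $\Delta_a(\mathcal{L}(n,n-b,|\mathcal{B}|))$, which is exactly the statement that $\mathcal{L}(n,a,|\mathcal{A}|)$ and $\mathcal{L}(n,b,|\mathcal{B}|)$ are cross-intersecting.

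Concretely, the steps in order are: (1) rewrite ``cross-intersecting'' as ``$\mathcal{A}$ is disjoint from the $a$-shadow of $\mathcal{B}^c$''; (2) from the hypothesis, deduce $\mathcal{A}\subseteq\binom{[n]}{a}\setminus\Delta_a(\mathcal{B}^c)$, hence $|\mathcal{A}|\le\binom{n}{a}-|\Delta_a(\mathcal{B}^c)|$; (3) apply Kruskal–Katona to $\mathcal{B}^c$ to get $|\Delta_a(\mathcal{B}^c)|\ge|\Delta_a(\mathcal{B}'^c)|$ where $\mathcal{B}'=\mathcal{L}(n,b,|\mathcal{B}|)$ and $\mathcal{B}'^c$ is the lex-last $(n-b)$-set family of the right size (using that complementation is an anti-isomorphism of the lex order and that $\Delta_a$ of an ``initial segment from the top'' is again such a segment); (4) combine to get $|\mathcal{L}(n,a,|\mathcal{A}|)|=|\mathcal{A}|\le\binom{n}{a}-|\Delta_a(\mathcal{B}'^c)|$, i.e. $\mathcal{L}(n,a,|\mathcal{A}|)\cap\Delta_a(\mathcal{B}'^c)=\emptyset$; (5) undo step (1) to conclude $\mathcal{L}(n,a,|\mathcal{A}|)$ and $\mathcal{L}(n,b,|\mathcal{B}|)$ are cross-intersecting. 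The main obstacle is the bookkeeping in step (3)–(4): carefully matching ``lexicographically smallest $k$-sets'' on one side with ``lexicographically largest $(n-k)$-sets'' on the other under complementation, and verifying that the Kruskal–Katona shadow-minimality transfers cleanly through this anti-isomorphism so that the resulting shadow is itself an initial lex segment whose complement is exactly $\mathcal{L}(n,a,|\mathcal{A}|)$-compatible. Since the excerpt explicitly cites \cite[p.266]{FK2017} for ``a detailed proof,'' I would present this as a sketch and defer the index-chasing to that reference.
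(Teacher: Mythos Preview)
Your overall approach---Hilton's reformulation of ``cross-intersecting'' as $\mathcal{A}\cap\Delta_a(\mathcal{B}^c)=\emptyset$, followed by Kruskal--Katona---is precisely the argument the paper sketches in the paragraph preceding the lemma (the paper gives no self-contained proof and defers to \cite{FK2017}). However, your statement of Kruskal--Katona in the second paragraph is incorrect for the lex order as defined in this paper: with $F\prec G$ meaning $\min(F\setminus G)<\min(G\setminus F)$, it is \emph{not} the lex-first family $\mathcal{L}(n,c,m)$ that minimizes the lower shadow, but rather the \emph{colex}-first family (equivalently the lex-last family, or the lex-first family after the reversal $i\mapsto n+1-i$). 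For instance, in $\binom{[4]}{2}$ the lex-first three sets $\{1,2\},\{1,3\},\{1,4\}$ have $1$-shadow of size $4$, while $\{1,2\},\{1,3\},\{2,3\}$ has $1$-shadow of size $3$. Thus the displayed inequality $|\Delta_a(\mathcal{L}(n,n-b,|\mathcal{B}|))|\le|\Delta_a(\mathcal{B}^c)|$ is false in general.

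You appear to self-correct by step~(3), where you work with the lex-\emph{last} family $\mathcal{B}'^c$ and invoke the fact that ``$\Delta_a$ of an initial segment from the top is again such a segment''---this is the correct version of Kruskal--Katona (shadows of colex-initial segments are colex-initial), and with it steps (3)--(5) go through exactly as you outline. So the only repair needed is to discard the misstated ``lex-first minimizes shadow'' formulation in paragraph~2 and use the colex/lex-last form consistently; once that is done, your proof is the standard one the paper has in mind.
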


\section{A result for cross-intersecting families}

\label{sec3}

 The cross-intersecting property is a
 natural extension on the intersecting property.
In this section,
we shall  prove some important properties
of pairs of cross-intersecting families.

\begin{theorem}\label{cross-intersectingtheo}
Let $d\ge 3$ and $n\ge 2d+1$ be positive integers.
Let $\mathcal{A}\subseteq {[n]\choose d+1}$ and
$\mathcal{B}\subseteq {[n]\choose d}$ be  cross-intersecting families.
If  $|\mathcal{A}|\ge 2$ and $\mathcal{A}$ is $2$-intersecting, then
\begin{equation*} \label{eqq3.1}
|\mathcal{A}|+|\mathcal{B}|\le {n\choose d}-{n-d-1 \choose d}- {n-d-2 \choose d-1}+2.
\end{equation*}
For $n\ge  2d+2$, the above equality holds if and only if, under the isomorphism, $\mathcal{A}=\{[d+1], [d]\cup \{d+2\}\}$ and $\mathcal{B}=\{B\in {[n] \choose d}: B\cap [d] \neq \emptyset
~\text{or}~  \{d+1,d+2\} \subseteq B\}$;
 or two more possibilities when $d=3$, namely, 
$\mathcal{A}=\{A\in {[n] \choose 4}: [3]\subseteq A\}$ and $\mathcal{B}=\{B\in {[n] \choose 3}: B\cap [3]\neq \emptyset\}$; or 
$\mathcal{A}=\{A\in {[n] \choose 4}: [2]\subseteq A\}$ and 
$\mathcal{B}=\{B\in {[n] \choose 3}: B\cap [2] \neq \emptyset\}$.  
\end{theorem}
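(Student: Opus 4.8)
The plan is to reduce to left-shifted families and then analyze the structure of a $2$-intersecting family $\mathcal{A}$ together with its cross-intersecting partner $\mathcal{B}$. First I would apply Lemma~\ref{lem32} to replace $\mathcal{A}$ and $\mathcal{B}$ by left-shifted families $\mathcal{A}'\subseteq\binom{[n]}{d+1}$ and $\mathcal{B}'\subseteq\binom{[n]}{d}$ with the same sizes, still cross-intersecting, with $\mathcal{A}'$ still $2$-intersecting. Since left-shifting can only shrink unions and preserves the hypotheses, it suffices to prove the bound for left-shifted families (and then to track which configurations can be extremal, pulling the characterization back through the shift at the end). For a left-shifted $2$-intersecting $\mathcal{A}'$ with $|\mathcal{A}'|\ge 2$, every member contains $\{1,2\}$ unless $\mathcal{A}'$ is very small; more precisely, the Fact on left-shifted families forces $\{1,2,3\}\in\mathcal{A}'$ once $|\mathcal{A}'|\ge 2$, and a case split on whether $[d+1]\cup\{d+2\}$-type sets or larger ``spread'' sets lie in $\mathcal{A}'$ will pin down the possibilities.

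The core of the argument is the trade-off: the larger $|\mathcal{A}|$ is, the more sets $\mathcal{B}$ is forbidden to contain, because every $B\in\mathcal{B}$ must meet every $A\in\mathcal{A}$. I would make this quantitative via the complement/shadow reformulation recalled before Lemma~\ref{lem31}: $\mathcal{A}'$ and $\mathcal{B}'$ cross-intersecting is equivalent to $\mathcal{B}'\cap\Delta_d((\mathcal{A}')^c)=\emptyset$, where $(\mathcal{A}')^c\subseteq\binom{[n]}{n-d-1}$, so $|\mathcal{B}'|\le\binom{n}{d}-|\Delta_d((\mathcal{A}')^c)|$. Hence it is enough to show $|\mathcal{A}'|-|\Delta_d((\mathcal{A}')^c)|\le -\binom{n-d-1}{d}-\binom{n-d-2}{d-1}+2$, i.e. that the shadow of the complement family grows fast relative to $|\mathcal{A}'|$. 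When $|\mathcal{A}'|=2$, say $\mathcal{A}'=\{[d+1],[d]\cup\{d+2\}\}$ (the smallest $2$-intersecting pair after shifting to lex), a direct computation of $\Delta_d$ of the two complement sets — which share the $(n-d-2)$-set $[n]\setminus([d]\cup\{d+1,d+2\})$ — gives exactly $\binom{n-d-1}{d}+\binom{n-d-2}{d-1}-1$, yielding equality. For $|\mathcal{A}'|\ge 3$ one must show the shadow of $(\mathcal{A}')^c$ increases by strictly more than one for each extra set; here I would use Lemma~\ref{lem31} (Kruskal--Katona via Hilton's observation) to compare against the lexicographically-least configurations, reducing the whole inequality to a finite set of extremal candidates for $\mathcal{A}'$.

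The main obstacle, and where the $d=3$ exceptions come from, is the boundary behaviour when $\mathcal{A}'$ is a ``full star of $2$-sets'' type family, namely $\mathcal{A}'=\{A:[2]\subseteq A\}$ or $\{A:[3]\subseteq A\}$ — these have size $\binom{n-2}{d-1}$ or $\binom{n-3}{d-2}$, much larger than $2$, yet their complements have comparatively large shadows, so the two sides of the trade-off can re-balance to equality precisely when $d=3$ (where $d+1=4$ and the arithmetic $\binom{n-2}{2}$ versus $\binom{n-4}{1}+\cdots$ lines up). I expect the careful part of the proof to be verifying that for $d\ge 4$ these star-type families fall strictly below the bound, and that no other left-shifted $2$-intersecting $\mathcal{A}'$ achieves equality; this is exactly the kind of finite case analysis (mirroring the $\mathcal{J}_2,\mathcal{G}_2,\mathcal{G}_3$ trichotomy in Theorem~\ref{thmhk}) that one resolves by a short computation in each of the $O(1)$ surviving cases. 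Finally I would unwind the shifting: since the extremal $\mathcal{A}'$ are all left-shifted and the equality forces rigidity in the Kruskal--Katona step, a standard argument shows the original $\mathcal{A}$ was isomorphic to one of the listed families, and $\mathcal{B}$ is then forced to be the full cross-intersecting complement $\{B:B\cap\Delta\neq\emptyset\}$ as stated.
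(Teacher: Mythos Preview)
Your plan diverges from the paper's. The paper runs a double induction on $n$ and $d$: the base $n=2d+1$ is Lemma~\ref{cross-intersectinglemma1}, and for $n\ge 2d+2$ one slices off the element $n$ after shifting, writing $|\mathcal{A}|+|\mathcal{B}|=\bigl(|\mathcal{A}(\bar n)|+|\mathcal{B}(\bar n)|\bigr)+\bigl(|\mathcal{A}(n)|+|\mathcal{B}(n)|\bigr)$, applying induction on $n$ to the first bracket and splitting the second according to whether $|\mathcal{A}(n)|$ is $0$, $1$, or $\ge 2$. In the last case one checks that $\mathcal{A}(n)\subseteq\binom{[n-1]}{d}$ remains $2$-intersecting and cross-intersecting with $\mathcal{B}(n)\subseteq\binom{[n-1]}{d-1}$, so induction on $d$ applies; the $d$-base is the $d=2$ Lemma~\ref{cross-intersectinglemma2}, and this is precisely where the two $d=3$ exceptions enter. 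The equality characterization is then pulled back through shifting by a direct argument in each of the three extremal shapes (using Lemma~\ref{lem-FT-cross} for the $[3]$-star case).

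Your shadow route is a reasonable alternative in spirit, but the central step is not actually carried out. You assert that Lemma~\ref{lem31} ``reduc[es] the whole inequality to a finite set of extremal candidates for $\mathcal{A}'$'', yet that lemma only guarantees that the pair $\bigl(\mathcal{L}(n,d+1,|\mathcal{A}|),\mathcal{L}(n,d,|\mathcal{B}|)\bigr)$ remains cross-intersecting; it does \emph{not} preserve the $2$-intersecting hypothesis on $\mathcal{A}$, so you cannot lex-compress $\mathcal{A}$ and keep the constraint. What does survive is Hilton's observation $|\mathcal{B}'|\le\binom{n}{d}-|\Delta_d((\mathcal{A}')^c)|$ for the \emph{fixed} left-shifted $\mathcal{A}'$, after which you must maximize $|\mathcal{A}'|-|\Delta_d((\mathcal{A}')^c)|$ over all left-shifted $2$-intersecting $\mathcal{A}'$ with $|\mathcal{A}'|\ge 2$. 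Kruskal--Katona lower-bounds $|\Delta_d|$ as a function of $|\mathcal{A}'|$ alone, but that bound is tight only for colex-initial complements, which need not correspond to a $2$-intersecting $\mathcal{A}'$; and $|\mathcal{A}'|$ ranges from $2$ up to $\binom{n-2}{d-1}$, not over $O(1)$ values. You have verified the endpoints ($|\mathcal{A}'|=2$ and the full $[2]$- and $[3]$-stars) but the claim that every intermediate configuration is strictly worse is exactly the substance of the theorem, and your proposal supplies no mechanism for it---this is the work the paper's induction sidesteps. Two smaller slips: your shadow count at $|\mathcal{A}'|=2$ is off by one, since inclusion--exclusion gives $2\binom{n-d-1}{d}-\binom{n-d-2}{d}=\binom{n-d-1}{d}+\binom{n-d-2}{d-1}$ (the ``$+2$'' in the bound comes from $|\mathcal{A}'|$ itself); and ``$\{1,2,3\}\in\mathcal{A}'$'' is impossible since $\mathcal{A}'\subseteq\binom{[n]}{d+1}$ with $d\ge 3$---presumably you meant $[d+1]$.
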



To prove Theorem \ref{cross-intersectingtheo},
we shall present a series of lemmas.
The following lemma
states that the condition $d\ge 3$ in Theorem \ref{cross-intersectingtheo}
is necessary since the result
is not true for the case $d=2$.

\begin{lemma}\label{cross-intersectinglemma2}
Let $n\ge 6$ be an integer and let $\mathcal{A}\subseteq {[n]\choose 3}$ and
$\mathcal{B}\subseteq {[n]\choose 2}$ be  cross-intersecting families.  If $|\mathcal{A}|\ge 2$ and $\mathcal{A}$ is $2$-intersecting, then
$$|\mathcal{A}|+|\mathcal{B}|\le {n\choose 2}-{n-3 \choose 2}+1.$$
Equality holds if and only if $\mathcal{A}=\{\{1,2\}\cup \{i\}: i\in[3,n]\}$ and $\mathcal{B}=\{B\in {[n] \choose 2}: 1\in B \:{\rm or} \: 2\in B\}$ under isomorphism.
\end{lemma}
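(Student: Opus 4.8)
The plan is to prove the sharp bound for the case $d=2$, where $\mathcal{A}\subseteq\binom{[n]}{3}$ is $2$-intersecting with $|\mathcal{A}|\ge 2$, and $\mathcal{B}\subseteq\binom{[n]}{2}$ is cross-intersecting with $\mathcal{A}$. First I would apply Lemma~\ref{lem32} to replace $\mathcal{A},\mathcal{B}$ by left-shifted families $\mathcal{A}',\mathcal{B}'$ of the same sizes, still cross-intersecting, with $\mathcal{A}'$ still $2$-intersecting; so I may assume $\mathcal{A}$ and $\mathcal{B}$ are left-shifted. A left-shifted $2$-intersecting family of triples must contain $\{1,2,3\}$ (since $|\mathcal{A}|\ge 2$ forces it to be nonempty, and shifting pushes sets downward), and in fact every member contains $\{1,2\}$: indeed, if some $A\in\mathcal{A}$ had $|A\cap\{1,2\}|\le 1$, left-shiftedness would produce a triple meeting $\{1,2,3\}$ in at most one element, contradicting $2$-intersecting together with $\{1,2,3\}\in\mathcal{A}$. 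Hence $\mathcal{A}\subseteq\{\{1,2,i\}:i\in[3,n]\}$, which has exactly $n-2$ members, so $|\mathcal{A}|\le n-2$.

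The key observation is the trade-off: since $\mathcal{B}$ is cross-intersecting with $\mathcal{A}$ and $\mathcal{A}$ is left-shifted containing $\{1,2\}$-sets, every $B\in\mathcal{B}$ must meet every member of $\mathcal{A}$. If $\mathcal{A}=\{\{1,2,i\}:i\in[3,n]\}$ is the full family (so $|\mathcal{A}|=n-2$), then any $B\in\binom{[n]}{2}$ with $B\cap\{1,2\}=\emptyset$, say $B=\{i,j\}$ with $i,j\ge 3$, fails to meet $\{1,2,k\}$ for any $k\notin\{i,j\}$; since $n\ge 6$ such $k$ exists, so $B\notin\mathcal{B}$. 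Thus $\mathcal{B}\subseteq\{B:B\cap\{1,2\}\ne\emptyset\}$, giving $|\mathcal{B}|\le\binom{n}{2}-\binom{n-2}{2}=2n-3$, and $|\mathcal{A}|+|\mathcal{B}|\le (n-2)+(2n-3)=3n-5=\binom{n}{2}-\binom{n-3}{2}+1$ after simplification, with the claimed extremal configuration. For the general case I would argue that $|\mathcal{A}|+|\mathcal{B}|$ is maximized at this endpoint: as one removes sets $\{1,2,i\}$ from $\mathcal{A}$, the cross-intersecting constraint on $\mathcal{B}$ relaxes, but each removed triple frees up only boundedly many new pairs for $\mathcal{B}$ while costing one from $|\mathcal{A}|$; a direct count (or the Kruskal–Katona / lexicographic tool Lemma~\ref{lem31}) shows the net change is non-positive when $n\ge 6$.

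Concretely, the cleanest route for the general bound is via Lemma~\ref{lem31}: since $n>3+2$, we may assume $\mathcal{A}=\mathcal{L}(n,3,|\mathcal{A}|)$ and $\mathcal{B}=\mathcal{L}(n,2,|\mathcal{B}|)$ are initial segments in lexicographic order, still cross-intersecting, and we still need $\mathcal{A}$ to be $2$-intersecting. Write $|\mathcal{A}|=a$; the lex-smallest $a$ triples with $a\le n-2$ are exactly $\{1,2,i\}$ for $i\in[3,a+2]$, which is automatically $2$-intersecting, and I would check that $a\le n-2$ is forced by $2$-intersecting (any lex-initial segment of triples of size $>n-2$ contains $\{1,3,4\}$, which meets $\{1,2,3\}$ in one point). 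Given $\mathcal{A}=\mathcal{L}(n,3,a)$, cross-intersection forces $\mathcal{B}$ to avoid $\Delta_2(\mathcal{A}^c)$; one computes $|\Delta_2(\mathcal{A}^c)|$ as a function of $a$ and hence $|\mathcal{B}|\le\binom{n}{2}-|\Delta_2(\mathcal{A}^c)|$, then maximizes $a+\binom{n}{2}-|\Delta_2(\mathcal{A}^c)|$ over $a\in[1,n-2]$; monotonicity gives the maximum at $a=n-2$.

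The main obstacle I anticipate is the \emph{uniqueness} part rather than the bound: pinning down that equality forces $\mathcal{A}=\{\{1,2\}\cup\{i\}:i\in[3,n]\}$ and $\mathcal{B}=\{B:B\cap\{1,2\}\ne\emptyset\}$ up to isomorphism requires unwinding the shifting and lex reductions carefully, because those operations do not preserve isomorphism type — one has to re-examine the equality cases in Lemma~\ref{lem32} and Lemma~\ref{lem31} and rule out the degenerate small-$n$ coincidences (e.g.\ making sure no sporadic extremal family appears for $n=6$). I would handle this by redoing the extremal analysis directly on a left-shifted pair: equality in the chain above forces $|\mathcal{A}|=n-2$ and $|\mathcal{B}|=2n-3$ exactly, the first pins $\mathcal{A}$ down completely as above, and then cross-intersection pins $\mathcal{B}$ down completely; finally I would observe that any family isomorphic to a non-left-shifted extremal configuration shifts to this one, so up to isomorphism the extremal pair is unique.
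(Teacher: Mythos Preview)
Your overall strategy—reduce to lex-initial segments via Lemma~\ref{lem31} and then analyze—is the paper's approach. But there is a real gap in how you obtain $|\mathcal{A}|\le n-2$. Your structural claim that every member of a left-shifted $2$-intersecting family of triples contains $\{1,2\}$ is false: $\binom{[4]}{3}$ is left-shifted and $2$-intersecting, yet $\{1,3,4\}$ and $\{2,3,4\}$ do not contain $\{1,2\}$ (and your shifting argument cannot work, since shifts only move indices downward and $\{1,3,4\}$ already meets $\{1,2,3\}$ in two points). Your alternative justification in the lex route is also flawed—$|\{1,3,4\}\cap\{1,2,3\}|=2$, not $1$—and, more seriously, circular: Lemma~\ref{lem31} does not preserve the $2$-intersecting property of $\mathcal{A}$, so you cannot deduce $a\le n-2$ from properties of $\mathcal{L}(n,3,a)$; the bound must be established for the \emph{original} family before any replacement. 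The paper handles this in one line by invoking Theorem~\ref{thm21} (exact Erd\H{o}s--Ko--Rado with $k=3$, $t=2$, $n\ge 6=(t+1)(k-t+1)$), which gives $|\mathcal{A}|\le\binom{n-2}{1}=n-2$ immediately.

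With that bound secured, the remainder is essentially as you sketch, though the paper splits cleanly into $|\mathcal{A}|=2$ (a direct count shows strict inequality) versus $3\le|\mathcal{A}|\le n-2$ (where cross-intersection forces $B\cap\{1,2\}\ne\emptyset$ for all $B\in\mathcal{B}$, whence $|\mathcal{B}|\le\binom{n}{2}-\binom{n-2}{2}$); your ``monotonicity'' remark hides the fact that the function $a\mapsto a+|\mathcal{B}_{\max}(a)|$ actually dips at $a=2$ before increasing. For uniqueness, Theorem~\ref{thm21} says a $2$-intersecting $\mathcal{A}\subseteq\binom{[n]}{3}$ with $|\mathcal{A}|=n-2$ is either the sunflower $\{\{1,2,i\}:i\in[3,n]\}$ or, at $n=6$ only, isomorphic to $\binom{T}{3}$ for some $T\in\binom{[n]}{4}$; the paper disposes of the latter by a short count ($|\mathcal{A}|+|\mathcal{B}|\le 4+6=10<13$). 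This is precisely the ``sporadic $n=6$'' check you anticipated but did not carry out.
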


\begin{proof} 
Since $\mathcal{A}$ is $2$-intersecting
and $n\ge 6$, by Theorem \ref{thm21},
we get $|\mathcal{A}|\le n-2$. 
By Lemma \ref{lem31},
we may assume that $\mathcal{A},\mathcal{B}$ are the collections of the smallest $|\mathcal{A}|,|\mathcal{B}|$ sets
in ${[n]\choose 3}, {[n]\choose 2}$ with respect to the lexicographical order, respectively. 
Then \[ \mathcal{A}=\Bigl\{ \{1,2,i\}: 3\le i\le  |\mathcal{A}| +2 \Bigr\}.\]

{\bf Case 1.} $|\mathcal{A}|=2$, that is, $\mathcal{A}=\{A_1, A_2\}$,
where $A_1=\{1,2,3\}$ and
$A_2=\{1,2,4\}$.
Denote by
\[ \mathcal{B}_1=
\left\{ B\in {[n] \choose 2}: B\cap A_1= \emptyset \right\},\]
and
 \[ \mathcal{B}_2=
 \left\{ B\in {[n] \choose 2}: B\cap A_1=\{3\},B\cap A_2= \emptyset  \right\}.\]
Since $\mathcal{A}, \mathcal{B}$ are cross-intersecting,
we have
$B\cap A_i\neq \emptyset$ for both $i=1, 2$ and every $B\in \mathcal{B}$.
Then the families $\mathcal{B}_1, \mathcal{B}_2$
and $\mathcal{B}$ are pairwise disjoint.
Therefore
\[  |\mathcal{B}| \le  {n \choose 2} - |\mathcal{B}_1|-|\mathcal{B}_2|
= {n\choose 2}-{n-3 \choose 2}- (n-4)
< {n\choose 2}-{n-3 \choose 2}-1. \]

{\bf Case 2.} $3\le |\mathcal{A}|\le n-2$.
Since $\mathcal{A}, \mathcal{B}$ are cross-intersecting, we get
$B\cap \{1,2\} \neq \emptyset$ for each $B\in \mathcal{B}$. Then
$ \mathcal{B}\subseteq \left\{B\in {[n]\choose 2}: B\cap [2]\neq \emptyset\right\}$.
Hence ,we have
\begin{align*}
|\mathcal{A}| + |\mathcal{B}|
\le n-2 + {n \choose 2} -{n-2 \choose 2}
= {n\choose 2}-{n-3 \choose 2}+1.
\end{align*}
Equality holds if and only if $\mathcal{A}=\{\{1,2\}\cup \{i\}: i\in[3,n]\}$ and $\mathcal{B}=\{B\in {n\choose 2}: 1\in B \:{\rm or}\: 2\in B\}$ under isomorphism.
Now we consider an arbitrary family $\mathcal{A}\subseteq {[n]\choose 3}$ with $|\mathcal{A}|=n-2$.
Since $\mathcal{A}$ is 2-intersecting, Theorem \ref{thm21} implies 
either $\mathcal{A}={T\choose 3}$ for some $T\in {[n]\choose 4}$ or
 $\mathcal{A}$ is isomorphic to $\{\{1,2\}\cup \{i\}: i\in[3,n]\}$.
 So assume that $\mathcal{A}={T\choose 3}$ for
 some $T\in {[n]\choose 4}$, which together with $|\mathcal{A}|=n-2$
 yields  $n=6$.
 Then $\mathcal{B}\subseteq {T\choose 2}$ since $\mathcal{A}$ and
$\mathcal{B}$ are cross-intersecting.
Then $|\mathcal{A}| + |\mathcal{B}|\le {4\choose 3}+{4\choose 2}=10<{6\choose 2}-{3 \choose 2}+1=13$.
So the extremal family is unique up to isomorphism. 
\end{proof}

\begin{lemma}\label{cross-intersectinglemma1}
Let $d\ge 1$ be an integer.
If $\mathcal{A}\subseteq {[2d+1]\choose d+1}$ and
$\mathcal{B}\subseteq {[2d+1]\choose d}$ are  cross-intersecting,
then
$$|\mathcal{A}|+|\mathcal{B}|\le {2d+1\choose d}.$$
\end{lemma}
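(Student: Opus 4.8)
The plan is to exploit the arithmetic of the ground set: in $[2d+1]$ the complement of a $(d+1)$-element set is a $d$-element set, so ``being disjoint'' is an extremely rigid relation between a member of $\mathcal{A}$ and a member of $\mathcal{B}$.

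First I would record the key local observation. For $A\in \binom{[2d+1]}{d+1}$ and $B\in \binom{[2d+1]}{d}$, if $A\cap B=\emptyset$ then $|A\cup B|=(d+1)+d=2d+1$, so $A\cup B=[2d+1]$ and hence $B=[2d+1]\setminus A=:A^c$. Conversely $A\cap A^c=\emptyset$. Thus a $d$-set $B$ is disjoint from $A$ if and only if $B=A^c$.

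Next, set $\mathcal{A}^c:=\{A^c: A\in \mathcal{A}\}\subseteq \binom{[2d+1]}{d}$. Since $A\mapsto A^c$ is a bijection on subsets, $|\mathcal{A}^c|=|\mathcal{A}|$. By the previous paragraph, the cross-intersecting hypothesis is equivalent to saying that $A^c\notin \mathcal{B}$ for every $A\in \mathcal{A}$, i.e. $\mathcal{A}^c\cap \mathcal{B}=\emptyset$. Therefore $\mathcal{A}^c$ and $\mathcal{B}$ are disjoint subfamilies of $\binom{[2d+1]}{d}$, and
\[
|\mathcal{A}|+|\mathcal{B}| = |\mathcal{A}^c|+|\mathcal{B}| = |\mathcal{A}^c\cup \mathcal{B}| \le \binom{2d+1}{d},
\]
which is the claimed bound.

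There is essentially no obstacle here; the only thing to get right is the disjointness characterization in the first step (using that a $d$-subset contained in the $d$-set $A^c$ must equal $A^c$), and the rest is a one-line counting argument. I would include this short argument as is.
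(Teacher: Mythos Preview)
Your proof is correct and is essentially the same argument as in the paper: both use that the complement of a $(d+1)$-set in $[2d+1]$ is the unique $d$-set disjoint from it, so cross-intersecting forces $\mathcal{A}^c$ and $\mathcal{B}$ to be disjoint inside $\binom{2d+1}{d}$. The paper phrases this as ``at most half of the sets in $\binom{[2d+1]}{d+1}\cup\binom{[2d+1]}{d}$ lie in $\mathcal{A}\cup\mathcal{B}$'', but the content is identical.
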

\begin{proof}
For every $F\in {[2d+1]\choose d+1}$,
we denote  $F^c=[2d+1] \setminus F$.
 Since $\mathcal{A}$ and
$\mathcal{B}$ are cross-intersecting,
we have $F \notin \mathcal{A}$ or $F^c \notin \mathcal{B}$.
Thus,  at most half of the sets in ${[2d+1]\choose d+1}\cup {[2d+1]\choose d}$ belong to $\mathcal{A} \cup \mathcal{B}$.
Then $$|\mathcal{A}|+|\mathcal{B}|\le {1\over 2}\left({2d+1\choose d+1}+{2d+1\choose d}\right)={2d+1\choose d}.$$
This completes the proof. 
\end{proof}

The following result due to Frankl and Tokushige \cite{FT1992} is needed for our purpose. 
We remark that a generalization was recent proved by Frankl and Wang \cite{FW2024-EUJC}. 

\begin{lemma}[See \cite{FT1992}] \label{lem-FT-cross}
If $\mathcal{A}\subseteq {[n] \choose a}$ and 
$\mathcal{B}\subseteq {[n] \choose b}$ are non-empty 
cross intersecting families with $n\geqslant a+b$ and $a\leqslant b$,  
then  
\[ |\mathcal{A}| + |\mathcal{B}|\leqslant 
{n \choose b} - {n-a \choose b} +1.\] 
For $n>a+b$, the equality holds if and only if 
$\mathcal{A}= \{A\}$ 
and $\mathcal{B}=\{B\in {[n] \choose b}: B\cap A\neq \emptyset \}$ or for $(a,b)=(2,2)$, there is one more possible family 
$\mathcal{A}=\mathcal{B}=\{S\in {[n] \choose 2}: 1\in S\}$. 
\end{lemma}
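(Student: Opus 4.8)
The plan is to prove the inequality by induction on $a+b$, using the lexicographic compression of Lemma~\ref{lem31} to reduce to initial segments and then repeatedly peeling off the element $1$. First I would dispose of the boundary case $n=a+b$. Here an $a$-set $A$ and a $b$-set $B$ are cross-intersecting exactly when $A\neq [n]\setminus B$, so writing $\mathcal{B}^c=\{[n]\setminus B:B\in\mathcal{B}\}\subseteq {[n]\choose a}$ the cross-intersecting condition becomes $\mathcal{A}\cap\mathcal{B}^c=\emptyset$; hence $|\mathcal{A}|+|\mathcal{B}|=|\mathcal{A}|+|\mathcal{B}^c|\le {n\choose a}={n\choose b}={n\choose b}-{n-a\choose b}+1$ since ${n-a\choose b}={b\choose b}=1$. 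This matches the asserted bound (and is why uniqueness is only claimed for $n>a+b$); it is the complementation idea already used in Lemma~\ref{cross-intersectinglemma1}. So from now on assume $n>a+b$, and by Lemma~\ref{lem31} replace $\mathcal{A},\mathcal{B}$ by the lex initial segments $\mathcal{L}(n,a,|\mathcal{A}|)$ and $\mathcal{L}(n,b,|\mathcal{B}|)$, which keep the same sizes and remain cross-intersecting. For such segments all sets through $1$ precede all sets avoiding $1$, so exactly one of two cases occurs.

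In Case A, every $A\in\mathcal{A}$ contains $1$. I set $\mathcal{A}'=\{A\setminus\{1\}:A\in\mathcal{A}\}\subseteq {[2,n]\choose a-1}$ and split $\mathcal{B}=\mathcal{B}_1\cup\mathcal{B}_{\bar 1}$ by whether $1\in B$. Every $B\in\mathcal{B}_1$ meets every $A$ through the common element $1$, so $\mathcal{B}_1$ imposes no condition and $|\mathcal{B}_1|\le {n-1\choose b-1}$; meanwhile $\mathcal{A}'$ and $\mathcal{B}_{\bar 1}$, viewed on the ground set $[2,n]$, are cross-intersecting with parameters $(a-1,b)$ on $n-1$ points. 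Applying the induction hypothesis to $(\mathcal{A}',\mathcal{B}_{\bar 1})$ gives $|\mathcal{A}|+|\mathcal{B}_{\bar 1}|\le {n-1\choose b}-{n-a\choose b}+1$, and Pascal's identity ${n-1\choose b-1}+{n-1\choose b}={n\choose b}$ closes the case; when $\mathcal{B}_{\bar 1}=\emptyset$ the crude bound $|\mathcal{A}|+|\mathcal{B}|\le {n-1\choose a-1}+{n-1\choose b-1}$ suffices.

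In Case B some $A\in\mathcal{A}$ avoids $1$, which for a lex initial segment forces $\mathcal{A}$ to contain \emph{every} $a$-set through $1$. Since $n>a+b$, a $b$-set $B$ avoiding $1$ would leave room for an $(a-1)$-set inside $[2,n]\setminus B$, producing an $a$-set of $\mathcal{A}$ disjoint from $B$; so every $B\in\mathcal{B}$ contains $1$. Now I peel $1$ from the $\mathcal{B}$-side: with $\mathcal{B}'=\{B\setminus\{1\}\}$ and $\mathcal{A}=\mathcal{A}_1\cup\mathcal{A}_{\bar 1}$, the pair $(\mathcal{A}_{\bar 1},\mathcal{B}')$ is cross-intersecting with parameters $(a,b-1)$ on $n-1$ points and $|\mathcal{A}_1|={n-1\choose a-1}$, so the induction hypothesis plus Pascal again yields the bound (the subcase $\mathcal{A}_{\bar 1}=\emptyset$ reducing to the same crude estimate). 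The base cases are $\min(a,b)=1$: when $a=1$, cross-intersection forces every $B$ to contain $\bigcup\mathcal{A}$, and a direct count gives $|\mathcal{A}|+|\mathcal{B}|\le |\mathcal{A}|+{n-|\mathcal{A}|\choose b-|\mathcal{A}|}\le {n-1\choose b-1}+1$. Each case reduces $a+b$ by one while preserving $n>a+b$ and $\min(a,b)\le\max(a,b)$, so the induction is well founded; the only genuinely computational points are the two routine binomial inequalities of the form ${n-1\choose a-1}+{n-a-1\choose b}\le {n-1\choose b}$ needed to verify the Pascal reductions.

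The main obstacle is the extremal analysis for $n>a+b$. The bound is tight only if equality holds at every step of the induction, and tracing this back should force $\mathcal{A}$ down to a single set $\{A\}$ with $\mathcal{B}=\{B\in{[n]\choose b}:B\cap A\neq\emptyset\}$. Two points require care. First, the compression of Lemma~\ref{lem31} can alter the families while preserving sizes, so recovering the extremal configuration in its original uncompressed form needs a separate uncompression argument showing the compression must have been trivial at equality. Second, in the degenerate case $a=b$ the two families play symmetric roles, and this is exactly where the extra possibility arises: the subcase $\mathcal{A}_{\bar 1}=\emptyset$ of Case B meets the crude estimate ${n-1\choose a-1}+{n-1\choose b-1}\le {n\choose b}-{n-a\choose b}+1$ with equality precisely when $a=b=2$, which produces the additional star family $\mathcal{A}=\mathcal{B}=\{S\in{[n]\choose 2}:1\in S\}$; I would check that for all other $(a,b)$ the estimate is strict and the single-set configuration is forced.
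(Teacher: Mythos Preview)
The paper does not actually prove this lemma: it is quoted as a known result of Frankl and Tokushige \cite{FT1992} (with the remark that a generalisation appears in \cite{FW2024-EUJC}), and no argument is given in the text. So there is no ``paper's own proof'' to compare against.

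That said, your proposal is sound and is in fact close in spirit to the original Frankl--Tokushige argument: reduce to lexicographic initial segments via the Kruskal--Katona/Hilton observation (Lemma~\ref{lem31}), then induct by peeling off the element $1$. Your case split (all of $\mathcal{A}$ through $1$, versus $\mathcal{A}$ containing every $a$-set through $1$) is exactly the dichotomy forced by lex segments, and the Pascal recombinations are correct. The one genuine computational step you flag, namely
\[
\binom{n-1}{a-1}+\binom{n-a-1}{b}\le \binom{n-1}{b}\qquad (a\le b,\ n>a+b),
\]
is indeed routine: telescoping gives $\binom{n-1}{b}-\binom{n-a-1}{b}=\sum_{j=0}^{a-1}\binom{n-2-j}{b-1}$, and each summand is at least $\binom{n-a-1}{b-1}\ge \binom{n-a-1}{a-1}$, from which the bound follows for $n>a+b$. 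One small point to tidy: in Case~B with $a=b$ the induction pair $(\mathcal{A}_{\bar 1},\mathcal{B}')$ has sizes $(a,b-1)$ with $b-1<a$, so you must swap the roles before invoking the hypothesis; the arithmetic then closes with $\binom{n}{a}-\binom{n-b}{a}+1=\binom{n}{b}-\binom{n-a}{b}+1$ since $a=b$. Your identification of where the extra extremal family at $(a,b)=(2,2)$ arises (the crude estimate in the $\mathcal{A}_{\bar 1}=\emptyset$ subcase meeting the bound with equality) is correct; carrying the uncompression argument through carefully is the only part you have left as a sketch.
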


Now we are ready to prove Theorem \ref{cross-intersectingtheo}.

\medskip

\noindent 
{\bf Proof of Theorem \ref{cross-intersectingtheo}}.
First let us apply induction on $n\ge 2d+1$ and $d \ge 3 $.
For the base case $n=2d+1$, the result holds by Lemma \ref{cross-intersectinglemma1}.
Assume that $n\ge 2d+2$ and the result holds for integers less than $n$.
By Lemma \ref{lem32}, we may assume that $\mathcal{A}$ and $\mathcal{B}$ are left-shifted.
Recall that
$\mathcal{A}(n):=\{A\setminus \{n\}: n\in
A\in \mathcal{A}\}$ and
$\mathcal{A}(\overline{n}) := \{A\in \mathcal{A}: n\notin A \}$.
For the family $\mathcal{B}$,
we define $\mathcal{B}(n)$ and $\mathcal{B}(\overline{n})$ similarly.

\medskip 
{\bf Claim 1.} $|\mathcal{A}(\overline{n})| \ge 2$. 

\begin{proof} 
If $\mathcal{A}(n)=\emptyset$, then $\mathcal{A}
=\mathcal{A}(\overline{n})$ and the claim holds
since $|\mathcal{A}| \ge 2$.
If $\mathcal{A}(n)\neq \emptyset$,
there exists $A\in \mathcal{A}$ with $n\in A$.
Since $n\ge 2d+2$ and $\mathcal{A}$ is left-shifted,
there exist two different elements
$u,v\in [n]\setminus A$ such that $(\mathcal{A}\setminus \{n\})\cup \{u\}, (\mathcal{A}\setminus \{n\})\cup \{v\}\in \mathcal{A}(\overline{n})$.
Thus the claim holds. 
\end{proof}

We can easily see that $\mathcal{A}(\overline{n})
$ and $\mathcal{B}(\overline{n}) $ are cross-intersecting, and  $\mathcal{A}(\overline{n})$
is $2$-intersecting.
Note that $\mathcal{A}(\overline{n})
\subseteq {[n-1] \choose d+1}$
and $\mathcal{B}(\overline{n}) \subseteq {[n-1 ] \choose d}$.
By applying the induction hypothesis, we have
\begin{equation} \label{eqqa}
|\mathcal{A}(\overline{n})|+|\mathcal{B}(\overline{n})|\le {n-1 \choose d}-{n-d-2 \choose d}- {n-d-3 \choose d-1}+2.
\end{equation}

In the sequel, we shall proceed in three cases: $\mathcal{A}(n)=\emptyset$, $|\mathcal{A}(n)|=1$ and
$|\mathcal{A}(n)|\ge 2$.

{\bf Case 1.} $\mathcal{A}(n)=\emptyset$.

Claim 1 gives $|\mathcal{A}(\overline{n})|\ge 2$.
Let $A_1, A_2 \in \mathcal{A}(\overline{n})$
and  $x\in [n-1]$ such that
$x\in A_1 \setminus A_2$.
 We denote 
\[ \mathcal{B}_1=
\left\{ B\in {[n-1] \choose d-1}: B\cap A_1= \emptyset \right\},\]
and
 \[ \mathcal{B}_2=
 \left\{ B\in {[n-1] \choose d-1}: B\cap A_1=\{x\},B\cap A_2= \emptyset  \right\}.\] 
 Clearly, we have $|\mathcal{B}_1|= {n-d-2 \choose d-1}$ and 
 $|\mathcal{B}_2| \ge {n-d-3 \choose d-2}$. 
 Note that $\mathcal{B}(n)$ and $\mathcal{A}(\overline{n})$ are cross-intersecting, and then $\mathcal{B}_1,\mathcal{B}_2$ and
$ \mathcal{B}(n)$ are pairwise disjoint. Therefore, we get
\begin{equation} \label{eqqb}
|\mathcal{B}(n)|\le
{n-1 \choose d-1} - |\mathcal{B}_1| - |\mathcal{B}_2| \le
 {n-1 \choose d-1}-{n-d-2 \choose d-1}-{n-d-3 \choose d-2}.
\end{equation}
Note that the equality in (\ref{eqqb}) holds if and only if
$\mathcal{A}(\overline{n})=\{A_1, A_2\}$ with
$|A_1\cap A_2|= d$ and $\mathcal{B}(n)
=\{B\in {[n-1] \choose d-1}: B\cap A_1 \neq \emptyset
~\text{and}~ B \cap A_2\neq \emptyset\}$.

Combining the inequalities (\ref{eqqa}) and (\ref{eqqb}), we get
 \begin{eqnarray} \label{eqeqc}
 |\mathcal{A}| + |\mathcal{B}|= |\mathcal{A}(\overline{n})|+|\mathcal{B}(\overline{n})|+|\mathcal{B}(n)|
 \le {n\choose d}-{n-d-1 \choose d}- {n-d-2 \choose d-1}+2.
\end{eqnarray}
The equality holds if and only if both equalities
 in (\ref{eqqa}) and (\ref{eqqb}) hold.
Hence, equality in (\ref{eqeqc}) holds  if and only if
$\mathcal{A}=\mathcal{A}(\overline{n}) =
\{A_1, A_2\}$ with $|A_1\cap A_2|= d$ and $\mathcal{B}=
\bigl\{ B\in {[n] \choose d}: B\cap A_1 \neq \emptyset
~\text{and}~ B \cap A_2\neq \emptyset \bigr\}$.

{\bf Case 2.} $|\mathcal{A}(n)|=1$, that is, there exists $A\in \mathcal{A}$ with $n\in A$.

For each $i=d+1,\ldots ,n$, we denote
$A_i=\{1,2,\ldots ,d,i\}$ and $\mathcal{A}'=\{A_{d+1}$, $A_{d+2}$, $\ldots $, $A_{n-1}\}$.
Since $\mathcal{A}$ is left-shifted and
$n\in A\in \mathcal{A}$,
we get $A_i\in \mathcal{A}$ for each $i$, that is,
$\mathcal{A}' \subseteq \mathcal{A}(\overline{n})$.
Note that $\mathcal{A}$ and
$ \mathcal{B}$ are cross-intersecting,
 which implies that
 $\mathcal{A}'$ and $\mathcal{B}(n)$ are cross-intersecting.
 Since $n\ge 2d+2$, we have $|\mathcal{A}'|\ge d+1$.
Then $B\cap [d]\neq \emptyset $ for every $B\in \mathcal{B}(n)$.
So
\begin{equation} \label{eqqc}
|\mathcal{B}(n)|\le {n-1\choose d-1}-{n-d-1 \choose d-1}.
\end{equation}
The equality holds if and only if $\mathcal{B}(n)
=\bigl\{ B\in {[n-1] \choose d-1} :
B\cap [d] \neq \emptyset \bigr\}$ and  then
$\mathcal{A}'=\mathcal{A}(\overline{n})$.

Firstly, we assume that $\mathcal{A}'=\mathcal{A}(\overline{n})$.
Since $|\mathcal{A}(n)|=1$ and $\mathcal{A}$ is left-shifted, we have
$\mathcal{A}=\{[d]\cup \{i\}:i \in [d+1,n]\}$.
Then $\mathcal{B}\subseteq \{B\in {[n]\choose d}: B\cap [d]\neq \emptyset\}$.
Hence
 \begin{eqnarray*}
 |\mathcal{A}| + |\mathcal{B}|&\le & (n-d)+ {n\choose d}-{n-d\choose d}\\
 &=& {n\choose d}-{n-d-1 \choose d}- {n-d-2 \choose d-1}+2+\left(n-d-2-{n-d-2\choose d-2}\right) \\
 &\le& {n\choose d}-{n-d-1 \choose d}- {n-d-2 \choose d-1}+2.
\end{eqnarray*}
The equality holds if and only if $n-d-2
= {n-d-2\choose d-2}$, which leads to
$d=3$,
and  $\mathcal{A}=\bigl\{ \{1,2,3\}\cup \{i\} :
i \in [4,n]\bigr\}$ and
$\mathcal{B}= \{ B\in {[n] \choose 3}: B\cap [3] \neq \emptyset\}$.

Now, we assume that $\mathcal{A}'\neq \mathcal{A}(\overline{n})$.
Then the inequality in (\ref{eqqc}) holds strictly, that is, 
\begin{equation} \label{neqq}
 |\mathcal{B}(n)|< {n-1\choose d-1}-{n-d-1 \choose d-1}.
\end{equation}
So combining (\ref{eqqa}) and (\ref{neqq}), and $|\mathcal{A}(n)|=1$, we have
 \begin{eqnarray*}
 |\mathcal{A}| + |\mathcal{B}|&=& |\mathcal{A}(\overline{n})|+|\mathcal{B}(\overline{n})|+|\mathcal{A}(n)|+|\mathcal{B}(n)| \\
 &<&
  {n-1 \choose d} \!-\! {n\!-\!d\!-\!2 \choose d} \!-\! {n\!-\!d\!-\!3 \choose d-1}+2+1+{n-1\choose d-1}-{n \!-\!d\!-\!1 \choose d-1}  \\
 &=& {n\choose d}-{n-d-1 \choose d}- {n-d-2 \choose d-1}+2+\left(1-{n-d-3\choose d-3}\right) \\
 &\le& {n\choose d}-{n-d-1 \choose d}- {n-d-2 \choose d-1}+2.
\end{eqnarray*}

{\bf Case 3.} $|\mathcal{A}(n)|\ge 2$. 
In this case, we shall prove the following two claims. 

{\bf Claim 2.} $\mathcal{A}(n)$ is $2$-intersecting.
Suppose on the contrary  that there exist $A_1, A_2\in \mathcal{A}(n)$ such that $|A_1\cap A_2|\le 1$.
Note that $\mathcal{A}$ is $2$-intersecting,
we then have $|A_1\cap A_2|=1$.
Thus $|A_1\cup A_2|=|A_1|+|A_2|-|A_1\cap A_2|= 2d-1$.
Since $n\ge 2d+2$, there exist two distinct elements
$u,v\in [n]\setminus (A_1\cup A_2)$.
Since $A_1,A_2\in \mathcal{A}(n)$, we have 
$A_1\cup \{n\}$ and $A_2\cup \{n\}$ are contained
in $\mathcal{A}$.
Then both $A_1\cup \{u\}$ and $A_2\cup \{v\}$ belong to $\mathcal{A}$ as $\mathcal{A}$ is left-shifted.
We can see that  $|(A_1\cup \{u\}) \cap  (A_2\cup \{v\})|=1$, contradicting the fact that $\mathcal{A}$ is $2$-intersecting.


{\bf Claim 3.} $\mathcal{A}(n)$ and $\mathcal{B}(n)$ are cross-intersecting.
For the sake of a contradiction, 
suppose that there exist $A \in \mathcal{A}(n)$ and $B \in \mathcal{B}(n)$ such that $A\cap B=\emptyset$.
Since  $|A\cup B|=|A|+|B|=2d-1$ and $n\ge 2d+2$,
there exist two different elements $u,v\in [n]\setminus (A\cup B)$.
Note that $A\cup \{n\}\in \mathcal{A}$, $B\cup \{n\}\in \mathcal{B}$.
Then $A\cup \{u\}\in \mathcal{A}$ and $B\cup \{v\}\in \mathcal{B}$ since $\mathcal{A}$ and $\mathcal{B}$ are shifted.
We can observe that
 $(A\cup \{u\}) \cap (B\cup \{v\})=\emptyset$, contradicting that $\mathcal{A}$ and $\mathcal{B}$ are cross-intersecting.

Let us apply the induction on $d$. For the base case $d=3$,
we have
$\mathcal{A}(n) \subseteq {[n-1] \choose 3}$ and
$\mathcal{B}(n) \subseteq {[n-1] \choose 2}$.
 If  $\mathcal{A}(n)$ is not isomorphic to
 $\bigl\{ \{1,2\}\cup \{i\}: i\in[3,n-1] \bigr\}$,
then the inequality in Lemma \ref{cross-intersectinglemma2} is strict, and we get
\begin{equation} \label{eqqd}
|\mathcal{A}(n)|+|\mathcal{B}(n)| <  {n-1 \choose 2}-{n-4\choose 2}+1.
\end{equation}
Adding (\ref{eqqa}) and (\ref{eqqd}), we have
\begin{eqnarray*}
 |\mathcal{A}| + |\mathcal{B}|&=& |\mathcal{A}(\overline{n})|+|\mathcal{B}(\overline{n})|+|\mathcal{A}(n)|+|\mathcal{B}(n)| \\
 &<& \left({n-1 \choose 3}-{n-5 \choose 3}- {n-6 \choose 2}+2 \right) +\left({n-1 \choose 2}-{n-4\choose 2}+1 \right)  \\
 &=& {n\choose 3}-{n-4 \choose 3}- {n-5 \choose 2}+2.
\end{eqnarray*} 
Next, we assume under isomorphism
 that $\mathcal{A}(n)=\bigl\{ \{1,2\}\cup \{i\}: i\in[3,n-1] \bigr\}$.
Thus, we have $\{1,2,n-1,n\}\in \cal A$. 
Since $\cal A$ is left-shifted, it follows that 
$\mathcal{A}$ contains every set $\{1,2,i,j\}$ with $3\le i<j\le n$. 
Consequently, we get $\mathcal{A}= \{A\in {[n] \choose 4}: [2]\subseteq A\}$. Otherwise, if $\{1,3,4,5\}\in \cal A$, 
then $|\{1,3,4,5\}\cap \{1,2,6,7\}|=1$, 
which is a contradiction since $\cal A$ is 2-intersecting.
Note that each set  $B\in \mathcal{B}$ satisfies
 $B\cap [2] \neq \emptyset$. 
Hence 
\[ |\mathcal{A}| + |\mathcal{B}|\le  \binom{n-2}{2}+ 
\left(\binom{n}{3} - \binom{n-2}{3}\right)= {n\choose 3}-{n-4 \choose 3}- {n-5 \choose 2}+2.\]
Moreover, the above equality holds if and only if 
$\mathcal{A}=\{A\in {[n] \choose 4}: [2]\subseteq A\}$ and 
$\mathcal{B}=\{B\in {[n] \choose 3}: B\cap [2] \neq \emptyset\}$ under the isomorphism.

So assume that $d\ge 4$ and the result holds for less than $d$.
Note that $\mathcal{A}(n) \subseteq {[n-1] \choose d}$ and
$\mathcal{B}(n) \subseteq {[n-1] \choose d-1}$.
Then applying the induction hypothesis, we have
\begin{eqnarray*}
|\mathcal{A}(n)|+|\mathcal{B}(n)|&\le& {n-1 \choose d-1}-{(n-1)-(d-1)-1 \choose d-1}- {(n-1)-(d-1)-2 \choose d-2}+2 \\
&=& {n-1 \choose d-1}-{n-d-1 \choose d-1}- {n-d-2 \choose d-2}+2,
\end{eqnarray*}
which together with the inequality (\ref{eqqa}) yields
\begin{small}
 \begin{eqnarray*}
 |\mathcal{A}| + |\mathcal{B}|&=& |\mathcal{A}(\overline{n})|+|\mathcal{B}(\overline{n})|+|\mathcal{A}(n)|+|\mathcal{B}(n)| \\
 &\le& {n\choose d}-{n-d-1 \choose d}- {n-d-2 \choose d-1}+2+\left(2-{n-d-2 \choose d-2}-{n-d-3 \choose d-3}\right)  \\
 &<& {n\choose d}-{n-d-1 \choose d}- {n-d-2 \choose d-1}+2,
\end{eqnarray*}
\end{small}
where $2-{n-d-2 \choose d-2}-{n-d-3 \choose d-3}<0$ since $n\ge 2d+2$ and $d\ge 3$.

{
In the above discussion, we have determined the extremal families 
$\mathcal{A}$ and $\mathcal{B}$ that attain the required upper bound  under the left-shifting assumption by Lemma \ref{lem32}. 
Next, we are going to characterize the extremal families in general case. Let $\mathcal{A}_0$ and $\mathcal{B}_0$ be the initial families before applying the left-shifting operations. 
In other words, $\mathcal{A}, \mathcal{B}$ are obtained from $\mathcal{A}_0, \mathcal{B}_0$ by applying a series of left-shifting operations. In what follows, 
we show that $\mathcal{A}_0$ is isomorphic to $\mathcal{A}$, and $\mathcal{B}_0$ is isomorphic to $\mathcal{B}$ as well. 

 Suppose on the contrary that there exist two families $\mathcal{A}_1$ and $\mathcal{B}_1$ such that $\mathcal{A}_1$ is not isomorphic to $\mathcal{A}$, and $\mathcal{B}_1$ is not isomorphic to $\mathcal{B}$ with $S_{i,j}(\mathcal {A}_1)=\cal A$ and $S_{i,j}(\mathcal {B}_1)=\cal B$ for some $1\le i<j\le n$.
We proceed the argument in the following three cases.

{\bf Case 1}. $\mathcal{A}=\{[d+1], [d]\cup \{d+2\}\}$ and $\mathcal{B}=\{B\in {[n] \choose d}: B\cap [d] \neq \emptyset
~\text{or}~  \{d+1,d+2\} \subseteq B\}$.
We denote $\mathcal {A}_1=\{A_1,A_2\}$. Since $\mathcal{A}_1$ is not isomorphic to $\mathcal{A}$, we have $|A_1\cap A_2|\le d-1$. 
As $\mathcal{A}_1$ and $\mathcal{B}_1$ are cross-intersecting, 
we get $|\mathcal{B}_1|<|\mathcal{B}|$, contradicting to $|\mathcal{B}_1|=|\mathcal{B}|$.

{\bf Case 2}. $\mathcal{A}=\{A\in {[n] \choose 4}: [3]\subseteq A\}$ and $\mathcal{B}=\{B\in {[n] \choose 3}: B\cap [3]\neq \emptyset\}$.
Recall that $S_{i,j}(\mathcal{A}_1)= \mathcal{B}$ and $S_{i,j}(\mathcal{B}_1)= \mathcal{B}$. 
Then it yields that $j\in [4,n]$ and $i\in [3]$. Without loss of generally, we may assume that $j=4$ and $i=3$.
We denote $\mathcal{A}_1=\{[4]\} \cup \mathcal{A}_{11}\cup \mathcal{A}_{12}$ and $\mathcal{B}_1=\{B\in {[n] \choose 3}: B\cap [2]\neq \emptyset, \:{\rm or} \:[3,4]\subset A\} \cup \mathcal{B}_{11}\cup \mathcal{B}_{12}$, where $\mathcal{A}_{11}=\{A\in \mathcal{A}_{1}:3\in A, 4\notin A\}$, $\mathcal{A}_{12}=\{A\in \mathcal{A}_{1}:3\notin A, 4 \in A\}$,
$\mathcal{B}_{11}=\{A\in \mathcal{B}_{1}\cap \binom{[3,n]}{3}:3\in A, 4\notin A\}$ and $\mathcal{B}_{12}=\{B\in \mathcal{B}_{1}\cap \binom{[4,n]}{3}: 4 \in B\}$. 
Note that 
$\mathcal{A}_{11}\neq \emptyset$ and $\mathcal{A}_{12}\neq \emptyset$ since $\mathcal{A}_1$ is not isomorphic to $\mathcal{A}$.
Similarly, we have $\mathcal{B}_{11}\neq \emptyset$ and $\mathcal{B}_{12}\neq \emptyset$.
We know that $\mathcal{A}_{11}$ and $\mathcal{B}_{12}$ are cross-intersecting, $\mathcal{A}_{12}$ and $\mathcal{B}_{11}$ are cross-intersecting. 
We denote $\mathcal{A}_{11}'=\{A\setminus [3]:A\in \mathcal{A}_{11}\}$ and $\mathcal{B}_{12}'=\{B\setminus \{4\}:B\in \mathcal{B}_{12}\}$.
It is clear that $\mathcal{A}_{11}'\subseteq \binom{[5,n]}{1}$ and $\mathcal{B}_{12}'\subseteq \binom{[5,n]}{2}$ are non-empty cross-intersecting with $|\mathcal{A}_{11}'|=|\mathcal{A}_{11}|$ and $|\mathcal{B}_{12}'|=|\mathcal{B}_{12}|$. 
Hence, Lemma \ref{lem-FT-cross} implies  
\[|\mathcal{A}_{11}|+|\mathcal{B}_{12}|=|\mathcal{A}_{11}'|+|\mathcal{B}_{12}'|\le \binom{n-4}{2}-\binom{n-5}{2}+1.\]
Similarly, we have 
\[|\mathcal{A}_{12}|+|\mathcal{B}_{11}| \le \binom{n-4}{2}-\binom{n-5}{2}+1.\]
Consequently, we obtain 
 \begin{eqnarray*}
 |\mathcal{A}_1|+|\mathcal{B}_1|&=&1+\binom{n-1}{2}+\binom{n-2}{2}+(|\mathcal{A}_{11}|+|\mathcal{B}_{12}|)+(|\mathcal{A}_{12}|+|\mathcal{B}_{11}|) \\
&\le & 1+\binom{n-1}{2}+\binom{n-2}{2}+2\left(\binom{n-4}{2}-\binom{n-5}{2}+1\right) \\
&< & (n-3)+\binom{n-1}{2}+\binom{n-2}{2}+\binom{n-3}{2} \\
&=& |\mathcal{A}|+|\mathcal{B}|,
\end{eqnarray*}
which leads to a contradiction.

{\bf Case 3}. $\mathcal{A}=\{A\in {[n] \choose 4}: [2]\subseteq A\}$ and 
$\mathcal{B}=\{B\in {[n] \choose 3}: B\cap [2] \neq \emptyset\}$.

This case is similar to Case 2, so we omit the details.
This completes the proof.
}
\qed

\section{Proof of Theorem \ref{thmmain}}

\label{sec4}

Let $4\le s \le n-2$ be integers,
$\mathcal{F}\subseteq 2^{[n]}$ be $s$-union, and $\mathcal{F}\nsubseteq \mathcal{K}(n,s)$
and $\mathcal{F}\nsubseteq \mathcal{H}(n,s)$.
Note that for each  $F\in \mathcal{F}$ and $E\subseteq F$,
the family
$\mathcal{F'}:=\mathcal{F} \cup \{E\}$  also has the $s$-union property.
Moreover, it is easy to see that $\mathcal{F}'$ is still not contained in the Katona family $\mathcal{K}(n,s)$
and the Frankl family $\mathcal{H}(n,s)$.
Thus, we may assume that $\mathcal{F}$ is hereditary (also known as a down-closed family, or complex), that is, $E\subseteq F\in \mathcal{F}$
implies $E\in \mathcal{F}$.
We next present our proof in two cases.

{\bf Case I.} Assume that $s=2d$.
Since $\mathcal{F}$ is $s$-union, we have $\mathcal{F}_i=\emptyset$ for every $i\ge 2d+1$.
Moreover, we can see that
$\mathcal{F}_{d+1}$ is 2-intersecting. Otherwise there exist two sets $A, B\in \mathcal{F}_{d+1}$ such that $|A\cap B|\le 1$,
then by the inclusion-exclusion principle, $|A\cup B|=|A|+|B|-|A\cap B|\ge 2d+1>s$, which contradicts the fact that $\mathcal{F}$ is $s$-union.

{\bf Subcase 1.1.} $d=2$.
In this case, we further assume that $\mathcal{F} \nsubseteq
\mathcal{H}^*(n,4)$.
If $\mathcal{F}$ has a $4$-element set $\{a,b,c,d\}$, then
$2^{\{a,b,c,d\}} \subseteq \mathcal{F}$ since $\mathcal{F}$ 
is hereditary. Since $s=4$ and $\mathcal{F}$ is $4$-union, we get 
$\mathcal{F} = 2^{\{a,b,c,d\}}$, and then 
$|\mathcal{F}| =2^4 < \sum_{i=0}^2 {n\choose i} - 
{n-3 \choose 2} - {n-4 \choose 2} +2$ since $n\ge s+2=6$, as required.
 Next, we  assume that $\mathcal{F}_4$ is empty.
 Since $\mathcal{F}$ is neither isomorphic to a subfamily of
 $ \mathcal{K}(n,4)$ nor of 
 $\mathcal{H}(n,4)$, we have
 $|\mathcal{F}_3| \ge 2$.
 If $\mathcal{F}_3$ has exactly two $3$-element sets,
 say $D_1,D_2$,  then 
 $|D_1\cap D_2|=2$. Since 
 $\mathcal{F}_3 $ and $\mathcal{F}_2$ are cross-intersecting, 
 we get $|\mathcal{F}_2| \le  {n \choose 2} - {n-4 \choose 2} - 2{n-4 \choose 1} =
 {n \choose 2} - {n -3 \choose 2}
 - {n-4 \choose 1}$. The Katona inequality (\ref{eqkat})  states that
 $ |\mathcal{F}_0| \le 1$ and $|\mathcal{F}_1| + |\mathcal{F}_4| \le n$.
 Thus, we get
 \[ |\mathcal{F}| \le
 \sum_{0\le i \le 2} {n \choose i} - {n -3 \choose 2}
 - {n-4 \choose 1} +2 .\]
 Moreover, the equality holds if and only if
 $\mathcal{F}=\mathcal{W}(n,4)$.
If $|\mathcal{F}_3|= 3$,
then there are two possibilities, namely,
$\mathcal{F}_3=\{\{a,b,c\},\{a,b,d\},\{a,b,e\}\}$
or $\mathcal{F}_3=\{\{a,b,c\}, \{a,b,d\}$, $\{b,c,d\}\}$
for some $\{a,b,c,d,e\} \subseteq [n]$.
For the first possibility,
we have $\mathcal{F}_2 \subseteq \{H \in {[n] \choose 2}:
H\cap \{a,b\} \neq \emptyset\}$, which implies that
$\mathcal{F}$ is isomorphic to a subfamily of
$\mathcal{H}^*(n,4)$, a contradiction;
for the second possibility,
we can verify that $\mathcal{F}_2
\subseteq \{H \in {[n] \choose 2} : b\in H\} \cup 
\bigl\{\{a,c\},\{a,d\},\{c,d\}\bigr\}$,
which leads to $|\mathcal{F}_2| \le n+2$.
Hence, we have $|\mathcal{F}| \le 1+n + (n-1) + 3$,
which is smaller than the required bound.
If $|\mathcal{F}_3| \ge 4$, then $\mathcal{F}_3
\subseteq \{\{a,b,x\} : x \in [n]\setminus \{a,b\}\}$,
which yields  $\mathcal{F}_2 \subseteq \{H \in {[n] \choose 2}:
H\cap \{a,b\} \neq \emptyset\}$. Hence
$\mathcal{F}$ is isomorphic to a subfamily of
$\mathcal{H}^*(n,4)$, a contradiction.

{\bf Subcase 1.2.} $d\ge 3$.
Since $\mathcal{F}$ is not a subfamily of $\mathcal{H}(n,2d)$ or $\mathcal{K}(n,2d)$, 
and $\mathcal{F}$ is hereditary, we get
$|\mathcal{F}_{d+1}| \ge 2$.
Setting $\mathcal{A}=\mathcal{F}_{d+1}$
and $\mathcal{B}=\mathcal{F}_d$, by Theorem \ref{cross-intersectingtheo},  we  obtain
\begin{equation} \label{eqeq5}
 |\mathcal{F}_d|+|\mathcal{F}_{d+1}|
\le  {n\choose d}-{n-d-1 \choose d}- {n-d-2 \choose d-1}+2.
\end{equation}
By Katona's inequality (\ref{eqkat}), we get that for each $i=0,1,\ldots ,d-1$,
\begin{equation}  \label{eqeq6}
 |\mathcal{F}_i|+|\mathcal{F}_{2d+1-i}| \le
{n \choose i}.
\end{equation}
Then adding (\ref{eqeq5}) and (\ref{eqeq6}), we have
 \begin{eqnarray*}
 |\mathcal{F}|&=&
 \sum_{0\le i \le d-1} (|\mathcal{F}_i|+|\mathcal{F}_{2d+1-i}|)  +
(|\mathcal{F}_d|+|\mathcal{F}_{d+1}|) \\
&\le& \sum_{0\le i \le d}{n\choose i}-{n-d-1 \choose d}- {n-d-2 \choose d-1}+2.
\end{eqnarray*}
For $d\ge 3$ and $n> 2d+1$,
whenever $|\mathcal{F}|$ attains the upper bound, 
 equalities have to hold in 
 (\ref{eqeq5}) and (\ref{eqeq6}) as well. Then 
$\mathcal{F}_i= {[n] \choose i}$ for every $i \le d-1$,
and $\mathcal{F}_{d+1}=\{D_1,D_2\}$ for fixed
$D_1,D_2 \in {[n] \choose d+1}$ with
$|D_1\cap D_2| =d$ and $\mathcal{F}_{d}=\{B\in {[n] \choose d}: B\cap D_1 \neq \emptyset
~\text{and}~ B \cap D_2\neq \emptyset\}$. 
{In addition,
for the case $d=3$, we know from Theorem \ref{cross-intersectingtheo} that 
there are two more possible families  
attaining the equality in (\ref{eqeq5}), namely, 
$ \mathcal{F}_3=\{F\in {[n] \choose 3} :
F\cap [3] \neq \emptyset \}$
and $\mathcal{F}_4=\bigl\{ F\in {[n] \choose 4}:[3] \subseteq F \bigr\}$; 
or 
$ \mathcal{F}_3=\{F\in {[n] \choose 3} :
F\cap [2] \neq \emptyset \}$
and $\mathcal{F}_4=\bigl\{ F\in {[n] \choose 4}: [2] \subseteq F\bigr\}$. 
Thus, $\mathcal{F}$ is isomorphic to $\mathcal{W}(n,6)$ or
$\mathcal{W}^*(n,6)$ or $W^{**}(n,6)$. 
}

{\bf Case II.} Assume that $s=2d+1$.
Since $\mathcal{F}$ is $s$-union, we know that $\mathcal{F}_{d+1}$ is intersecting.

{\bf Subcase 2.1.} $|\mathcal{F}_{d+2}|=0$.
By the hereditary property of $\mathcal{F}$, we get
 $\mathcal{F}_{i}=\emptyset$ for every $i\ge d+2$.
Since $\mathcal{F}\nsubseteq \mathcal{K}(n,2d+1)$ and $\mathcal{F}\nsubseteq \mathcal{H}(n,2d+1)$,
we have $\mathcal{F}_{d+1}\nsubseteq \mathcal{EKR}(n,d+1)$ and $\mathcal{F}_{d+1}\nsubseteq \mathcal{HM}(n,d+1)$.
For the case $d=2,s=5$, the condition
$\mathcal{F} \nsubseteq \mathcal{T}(n,5)$
implies that $\mathcal{F}_3 \nsubseteq
\mathcal{G}_2(n,3)$.
By Theorem \ref{thmhk}, we have
\begin{equation}  \label{eq5}
 |\mathcal{F}_{d+1}|\le {n-1 \choose d}-{n-d-2 \choose d}- {n-d-3 \choose d-1}+2.
 \end{equation}
Moreover, by Katona's inequality (\ref{eqkat}), then for each $i\in \{1,2,\ldots ,d\}$, we have
\begin{equation} \label{eq6}
|\mathcal{F}_i| + |\mathcal{F}_{2d+2-i}| \le {n \choose i}.
\end{equation}
Therefore, we obtain
 \begin{eqnarray*}\label{case1}
 |\mathcal{F}|&=&\sum_{0\le i \le d} (|\mathcal{F}_i|+|\mathcal{F}_{2d+2-i}|)+ |\mathcal{F}_{d+1}| \\
 &\le& \sum_{0\le i \le d}{n\choose i}+{n-1 \choose d}-{n-d-2 \choose d}- {n-d-3 \choose d-1}+2,
\end{eqnarray*}
In case of equality, the inequality in (\ref{eq5}) and (\ref{eq6}) must be an equality.
By Theorem \ref{thmhk}, when  $s\ge 9$, that is, $d\ge 4$, we get that
$\mathcal{F}_{d+1}$ is isomorphic to
$\mathcal{J}_2(n,d+1)$. Hence $\mathcal{F}$
is isomorphic to
\[ \mathcal{W}(n,2d+1)=\{F \subseteq [n]: |F|\le d\}
\cup \mathcal{J}_2(n,d+1).  \]
In addition, when $s=7$, that is, $d= 3$, then
$\mathcal{F}_{4}$ is isomorphic to
$\mathcal{J}_2(n,4)$, or $\mathcal{G}_2(n,4)$ or $\mathcal{G}_3(n,4)$.
Thus, $\mathcal{F}$
is isomorphic to
$ {[n] \choose \le 3} \cup \mathcal{J}_2(n,4)$, or
${[n] \choose \le 3} \cup \mathcal{G}_2(n,4)$
or ${[n] \choose \le 3} \cup \mathcal{G}_3(n,4)$;
when  $s=5$, that is, $d=2$, we see that 
$\mathcal{F}_{3}$ is isomorphic to
$\mathcal{J}_2(n,3)$, and then $\mathcal{F}$  is isomorphic to
${[n] \choose \le 2}\cup \mathcal{J}_2(n,3)$.

{\bf Subcase 2.2.}
$|\mathcal{F}_{d+2}|=1$.
Denote $\mathcal{F}_{d+2}=\{A\}$.
Note that ${A \choose d+1} \subseteq \mathcal{F}_{d+1}$ and then
$\cap_{F\in \mathcal{F}_{d+1}}F=\emptyset$.
By the Hilton--Milner theorem, we get
\[ |\mathcal{F}_{d+1}|\le {n-1 \choose d}-{n-d-2 \choose d}+1. \]
Since $\mathcal{F}$ is $(2d+1)$-union,
every set of $\mathcal{F}_d$ intersects  $A$,
and so $|\mathcal{F}_d|\le {n \choose d}-{n-d-2 \choose d}$.
Hence
\[ |\mathcal{F}_d|+|\mathcal{F}_{d+2}|\le {n \choose d}-{n-d-2 \choose d}+1.\]
Therefore, we obtain
 \begin{eqnarray*}
 |\mathcal{F}|&=&\sum_{0\le i \le d} (|\mathcal{F}_i|+|\mathcal{F}_{2d+2-i}|)+ |\mathcal{F}_{d+1}|\\
 &\le& \sum_{0\le i \le d-1}{n\choose i}+\left({n \choose d}-{n-d-2 \choose d}+1\right)+{n-1 \choose d}-{n-d-2 \choose d}+1\\
 &<& \sum_{0\le i \le d}{n\choose i}+{n-1 \choose d}-{n-d-2 \choose d}- {n-d-3 \choose d-1}+2.
\end{eqnarray*}

{\bf Subcase 2.3.}
$|\mathcal{F}_{d+2}|\ge 2$.
Since $\mathcal{F}$ is $s$-union, $\mathcal{F}_{d+1}$ is intersecting.
Furthermore, we claim that $\mathcal{F}_{d+1} \nsubseteq \mathcal{EKR}(n,d+1)$ and $\mathcal{F}_{d+1}\nsubseteq \mathcal{HM}(n,d+1)$.
Let $A, B\in \mathcal{F}_{d+2}$.
The down-closed property implies that both ${A \choose d+1}$ and $ {B \choose d+1}
$ are contained in $ \mathcal{F}_{d+1}$.
Firstly, we can see that
$\bigcap_{F\in \mathcal{F}_{d+1}} F= \emptyset$
since ${A \choose d+1}\subseteq \mathcal{F}_{d+1}$. Thus,
we get $\mathcal{F}_{d+1} \nsubseteq \mathcal{EKR}(n,d+1)$. Secondly,
we claim that $\mathcal{F}_{d+1}\nsubseteq \mathcal{HM}(n,d+1)$.
Assume on the contrary  that
$\mathcal{F}_{d+1}\subseteq \mathcal{HM}(n,d+1)$, where 
\[ \mathcal{HM}(n,d+1):=\left\{F \in {[n] \choose d+1}:
1\in F, F\cap  [2,d+2]\neq \emptyset \right\} \cup \bigl\{ [2,d+2] \bigr\}. \]
If $1\notin A$ or $1\notin B$, then
we can find at least two distinct $(d+1)$-element sets in
$ \mathcal{F}_{d+1}$ such that they  do not contain element $1$, contradicting  $\mathcal{F}_{d+1} \subseteq \mathcal{HM}(n,d+1)$.
If $1\in A\cap B$, then  both $A\setminus \{1\}$ and $B\setminus \{1\}$
are distinct $(d+1)$-element sets in $\mathcal{F}_{d+1}$
not containing element $1$,  a contradiction.
Thus $\mathcal{F}_{d+1}\nsubseteq \mathcal{HM}(n,d+1)$.
To apply Theorem \ref{thmhk} for the family $\mathcal{F}_{d+1}$ 
in the case $d=2$ and $s=5$,
we need to distinguish the cases 
whether $\mathcal{F}_3$ is a subfamily of $\mathcal{G}_2(n,3)$.

Firstly, we assume that $d\ge 3$, or $d=2$ and 
$\mathcal{F}_{3} \nsubseteq \mathcal{G}_2(n,3)$.
Then as in Subcase 2.1,
applying Theorem \ref{thmhk} and Katona's inequality (\ref{eqkat}) 
yields  the required inequality.
Furthermore, since $\mathcal{F}_{d+2}\neq \emptyset$, the equality
in (\ref{eq6}) can not  happen.
Thus, the required inequality holds strictly.

Now assume that $d=2$ and $\mathcal{F}_{3} \subseteq \mathcal{G}_2(n,3)$.
Note that $\mathcal{F}$ is $5$-union and
so $\mathcal{F}_4$ is $3$-intersecting.
Without loss of generality we may
assume that $\{1,2,3,4\}$, 
$\{1,2,3,5\} \in \mathcal{F}_4$.
Thus $\{1,2,3\}$, $\{1,2,4\}$, $\{1,3,4\}$, 
$\{2,3,4\}$, $\{1,2,5\}$,
$\{1,3,5\}$, $\{2,3,5\} \in \mathcal{F}_3$ since $\mathcal{F}$ is hereditary.
Then $\mathcal{F}_3\subseteq \{F\in {[n] \choose 3} : |F\cap \{1,2,3\}| \ge 2\}$ and
$\mathcal{F}_4 \subseteq\{F\in {[n] \choose 4} : \{1,2,3\} \subseteq F\}$.
If $|\mathcal{F}_{4}|\ge 3$, then $\mathcal{F}_2\subseteq \{F\in {[n] \choose 2} : |F \cap \{1,2,3\}| \ge 1\}$.
If $|\mathcal{F}_{4}|=2$, then $\mathcal{F}_2\subseteq \{F\in {[n] \choose 2} : |F \cap \{1,2,3\}| \ge 1\}\cup \{4,5\}$.
This implies that
\[ |\mathcal{F}_2| +|\mathcal{F}_4| \le
\max\{(3n-5)+2,(3n-6) + (n-3)\}=4n -9. \quad
  \]
The Katona inequality (\ref{eqkat}) implies that
$ |\mathcal{F}_0| \le 1$ and
$ |\mathcal{F}_1| +|\mathcal{F}_5| \le { n }$.
And   $|\mathcal{F}_3| \le  3n-8$ since $\mathcal{F}_3\subseteq \{F\in {[n] \choose 3} : |F\cap \{1,2,3\}| \ge 2\}$.
Therefore, we obtain $|\mathcal{F}| \le 8n -16$,
which is less than the desired bound
$1+ {n \choose 1} +{n \choose 2} +{n-1 \choose 2} - {n-4 \choose 2} - {n-5 \choose 1} +2=\frac{1}{2}(n^2+5n-2)$
since $n\ge s+2=7$.
This completes the proof.
\qed

\section{Concluding remarks}

For two subsets $A$ and $B$,
the symmetric difference of $A,B$ is defined as
\[  A\triangle B
=(A\setminus B)\cup (B\setminus A)=
(A\cup B) \setminus (A\cap B). \]
The distance of $A$ and $B$ is defined as
$d(A,B)=|A\triangle B|$.
The family of all subsets
with the binary function $d$ forms a metric space.
The diameter of a family $\mathcal{F}$ is defined as
the maximum distance of pairs of sets in $\mathcal{F}$.
Observe that $|F\cup F'| \le s$
implies $d(F,F')\le s$. However, the reverse  is
not true. 
In 1966, Kleitman \cite{Kle1966} extended
 Katona's Theorem \ref{thm1.1} to the families with given diameter.

\begin{theorem}[Kleitman \cite{Kle1966}] \label{thm-Kleit}
Suppose that $n>s \ge 2$ and
$\mathcal{F}$ is a family of subsets of $[n]$ with diameter at most $s$,
that is,  $d(F,F')\le s$ for all distinct sets $F,F' \in \mathcal{F}$. 
\begin{itemize}
\item[{\rm (1)}]
 If $s=2d$ for some $d\ge 1$, then
$ |\mathcal{F}| \le \sum_{0\le i\le d} {n \choose i}$. 

\item[{\rm (2)}] 
If $s=2d+1$ for some $d\ge 1$, then
$ |\mathcal{F}| \le \sum_{0\le i\le d} {n \choose i}
+ {n-1 \choose d}$.
\end{itemize}
\end{theorem}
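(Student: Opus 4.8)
The plan is to obtain Theorem~\ref{thm-Kleit} from a cross-version for \emph{pairs} of families, which is what the natural induction on $n$ produces. Say that $\mathcal{A},\mathcal{B}\subseteq 2^{[m]}$ are \emph{$\ell$-separated} if $d(A,B)=|A\triangle B|\le\ell$ for every $A\in\mathcal{A}$ and $B\in\mathcal{B}$; thus a single family has diameter at most $\ell$ exactly when it is $\ell$-separated from itself. Writing $M_r(m)=\sum_{0\le i\le r}\binom{m}{i}$ for the volume of a Hamming ball of radius $r$, the even case rests on the product bound: \emph{if $m>2d$ and $\mathcal{A},\mathcal{B}\subseteq 2^{[m]}$ are $2d$-separated, then $|\mathcal{A}|\cdot|\mathcal{B}|\le M_d(m)^2$}. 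Applying this with $\mathcal{A}=\mathcal{B}=\mathcal{F}$ gives part~(1) at once, and the ball $\binom{[m]}{\le d}$ shows it is sharp. The odd case will need a separate and more delicate argument, because the analogous product bound for $(2d+1)$-separated pairs only holds once $m$ is large compared with $d$, and even then its diagonal consequence is strictly weaker than the asserted $\sum_{i\le d}\binom{m}{i}+\binom{m-1}{d}$; so part~(2) cannot simply be read off from it.

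For the product bound I would induct on $m$ (the step at separation parameter $2d$ also invokes the parameter-$(2d-1)$ case on $[m-1]$, so one proves all parameters at each $m$ simultaneously). The base range of $m$ near $2d$ is checked by hand --- e.g.\ for $m=2d+1$ the constraint $d(A,B)\le 2d<m$ forbids complementary cross-pairs, so $|\mathcal{B}|\le 2^m-|\mathcal{A}|$ and $|\mathcal{A}||\mathcal{B}|\le 2^{2m-2}=M_d(m)^2$. For the step, split both families at the last coordinate: with $\mathcal{A}^0=\{A:m\notin A\in\mathcal{A}\}$ and $\mathcal{A}^1=\{A\setminus\{m\}:m\in A\in\mathcal{A}\}$ in $2^{[m-1]}$, and $\mathcal{B}^0,\mathcal{B}^1$ likewise, one checks that the two ``diagonal'' pairs $(\mathcal{A}^0,\mathcal{B}^0),(\mathcal{A}^1,\mathcal{B}^1)$ and the pair of unions $(\mathcal{A}^0\cup\mathcal{A}^1,\mathcal{B}^0\cup\mathcal{B}^1)$ stay $2d$-separated, whereas the two ``off-diagonal'' pairs, the pair of intersections $(\mathcal{A}^0\cap\mathcal{A}^1,\mathcal{B}^0\cap\mathcal{B}^1)$, and the mixed union--intersection pairs become $(2d-1)$-separated, because adding or deleting the coordinate $m$ on exactly one side changes a distance by one. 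Put $u=|\mathcal{A}^0\cup\mathcal{A}^1|$, $p=|\mathcal{A}^0\cap\mathcal{A}^1|$, $v=|\mathcal{B}^0\cup\mathcal{B}^1|$, $q=|\mathcal{B}^0\cap\mathcal{B}^1|$, so $|\mathcal{A}|=u+p$, $|\mathcal{B}|=v+q$; with $P=M_d(m-1)$ and $Q=M_{d-1}(m-1)$ (so $M_d(m)=P+Q$ by Pascal) the inductive hypotheses give $uv\le P^2$ and $uq,\ pv,\ pq\le PQ$, besides the trivial $p\le u$, $q\le v$. It then remains to verify the elementary inequality $(u+p)(v+q)\le(P+Q)^2$ under these constraints: if $uv\le PQ$ then $p=u,q=v$ is feasible and $(u+p)(v+q)=4uv\le 4PQ\le(P+Q)^2$; if $uv>PQ$ then $uq,pv\le PQ$ force $q\le PQ/u$, $p\le PQ/v$, so $(u+p)(v+q)\le uv+2PQ+(PQ)^2/(uv)$, and the right-hand side increases in $t=uv$ on $(PQ,P^2]$, hence is at most $P^2+2PQ+Q^2$. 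It is convenient to normalize first: applying the down-compression $D_i$ \emph{simultaneously} to $\mathcal{A}$ and $\mathcal{B}$ preserves all sizes and $\ell$-separation (a short case analysis on a pair $A\in\mathcal{A},B\in\mathcal{B}$ according to whether $i$ lies in $A$, in $B$, or in neither, using that when $D_i$ changes $A$ the set $A\setminus\{i\}$ was already in $\mathcal{A}$), so we may take $\mathcal{A},\mathcal{B}$ hereditary, which is what the extremal analysis and part~(2) want.

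For part~(2) one runs the same split on a single $\mathcal{F}$ of diameter at most $2d+1$ and writes $|\mathcal{F}|=|\mathcal{F}^0\cup\mathcal{F}^1|+|\mathcal{F}^0\cap\mathcal{F}^1|$, feeding in part~(2) on $[n-1]$ for $\mathcal{F}^0\cup\mathcal{F}^1$ (still of diameter $\le 2d+1$), part~(1) on $[n-1]$ for $\mathcal{F}^0\cap\mathcal{F}^1$ (of diameter $\le 2d$), and the $2d$-separation of $\mathcal{F}^0$ and $\mathcal{F}^1$. Taken at face value these three bounds overshoot the target by exactly $\binom{n-2}{d}$, so the crux is that they cannot be simultaneously tight: if $\mathcal{F}^0\cup\mathcal{F}^1$ is a shifted copy of the extremal Katona family on $[n-1]$, then its $(d{+}1)$-element members together with the $2d$-separation between $\mathcal{F}^0$ and $\mathcal{F}^1$ force a shadow's worth of $d$-sets to lie in only one of $\mathcal{F}^0,\mathcal{F}^1$, which shrinks $\mathcal{F}^0\cap\mathcal{F}^1$ by precisely the deficit. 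I expect this interlocking of the even and odd statements --- in effect, carrying a little stability information about the Katona extremal configuration through the induction --- to be the main obstacle of the proof; by contrast the base cases, where the bound reads $2^s$ and follows from $d(F,F^c)=s+1>s$, are entirely routine.
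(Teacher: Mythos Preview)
The paper does not actually prove Theorem~\ref{thm-Kleit}; it is merely quoted in the concluding remarks as Kleitman's 1966 result, so there is no ``paper's proof'' to match. That said, your proposal has a concrete flaw.

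Your induction for the even product bound at parameter $2d$ on $[m]$ invokes an \emph{odd} product bound at parameter $2d-1$ on $[m-1]$, namely $|\mathcal{C}|\cdot|\mathcal{D}|\le M_d(m-1)\,M_{d-1}(m-1)=PQ$ for $(2d-1)$-separated pairs. This inequality is \emph{false} already at the first place you need it. Take $d=2$, $m-1=5$, and let
\[
\mathcal{C}=\mathcal{D}=\tbinom{[5]}{\le 1}\cup\bigl\{\{1,i\}:2\le i\le 5\bigr\},
\]
which is hereditary and has diameter $3$. Then $|\mathcal{C}|=|\mathcal{D}|=10$, so $|\mathcal{C}|\cdot|\mathcal{D}|=100>96=M_2(5)M_1(5)$. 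Concretely, if you set $\mathcal{A}=\mathcal{B}=\mathcal{C}\cup\{C\cup\{6\}:C\in\mathcal{C}\}\subseteq 2^{[6]}$ (hereditary, diameter $4$), then $u=v=p=q=10$ and $uv=100>PQ=96$, so your Case~2 deduction ``$q\le PQ/u$'' gives $q\le 9.6$, contradicting $q=10$. The conclusion $(u+p)(v+q)=400\le (P+Q)^2=484$ happens to hold here, but your argument does not establish it. Since you propose proving ``all parameters at each $m$ simultaneously'', this broken odd case wrecks the whole scheme; and as you yourself note, the odd diagonal bound cannot be recovered from the product bound anyway.

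The irony is that you already have in hand the tool for a short correct proof. You observe that down-compressions $D_i$ preserve sizes and $\ell$-separation and may be applied until $\mathcal{F}$ is hereditary. Now for a \emph{hereditary} family, diameter $\le s$ is equivalent to being $s$-union: given $F,F'\in\mathcal{F}$, the set $F\setminus F'$ lies in $\mathcal{F}$ as well, and $d(F\setminus F',\,F')=|F\cup F'|$, so $|F\cup F'|\le s$. Hence after compression Theorem~\ref{thm1.1} (Katona) applies verbatim and gives both parts~(1) and~(2) at once, with the correct extremal values. This is the standard reduction and avoids the product-bound detour entirely.
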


In 2017, Frankl \cite{Fra2017cpc} determined the extremal families 
attaining the upper bound, and he 
also established a stability result on Kleitman's theorem, 
that is, a diameter version of Theorem \ref{thm1.2}.
Inspired by these results, we would like to propose the following problem 
for readers.

\begin{problem}
Is there a diameter version of Theorem \ref{thmmain}? 
\end{problem}

In 2020, Huang, Klurman and Pohoata  \cite{HKP2020} provided an algebraic proof for Theorem \ref{thm-Kleit}, which leads to an algebraic proof of Theorem \ref{thm1.1} as well. 
For the case $s=2d+1$, 
 Gao, Liu and Xu \cite{GLX2022} recently proved a finer stability result  
 on Kleitman's theorem 
  by  an algebraic method. 

\begin{problem}
Are there algebraic proofs of Theorems 
\ref{thm1.2} and \ref{thmmain}? 
\end{problem}

\section*{Acknowledgements}   
We would like to express our gratitude to the anonymous reviewers for their detailed and constructive comments 
which greatly improved the presentation of this paper. 
Yongtao Li was supported by the Postdoctoral Fellowship Program of CPSF (No. GZC20233196). 
Biao Wu was supported by the NSFC (No. 11901193).

\frenchspacing
\bibliographystyle{unsrt}

\end{document}